 \newtheorem{thm}{Theorem}[section]
 \newtheorem{lem}[thm]{Lemma}
 \newtheorem{prop}[thm]{Proposition}
 \theoremstyle{definition}
 \newtheorem{defn}[thm]{Definition}
 \newtheorem{prob}[thm]{Problem}
 \theoremstyle{remark}
 \newtheorem{rem}[thm]{Remark}
\numberwithin{equation}{section} \numberwithin{figure}{section}
\newcommand{\CC}{{\mathbb C}}
\newcommand{\DD}{{\mathbb D}}
\newcommand{\TT}{{\mathbb T}}
\newcommand{\NN}{{\mathbb N}}
\DeclareMathOperator{\Pol}{{\mathcal P}}
\DeclareMathOperator{\dist}{dist}
\DeclareMathOperator{\Span}{span}
\begin{document}
\bibliographystyle{alpha}

\title[Cyclicity and extremal polynomials]{Cyclicity in Dirichlet-type spaces and extremal polynomials}
\author[B\'en\'eteau]{Catherine B\'en\'eteau}
\address{Department of Mathematics, University of South Florida, 4202 E. Fowler Avenue, Tampa, FL 33620, USA.}
\email{cbenetea@usf.edu}
\author[Condori]{Alberto A. Condori}
\address{Department of Mathematics, Florida Gulf Coast University,
10501 FGCU Boulevard South, Fort Myers, FL 33965-6565, USA.}
\email{acondori@fgcu.edu}
\author[Liaw]{Constanze Liaw}
\address{Department of Mathematics, Baylor University, One Bear Place \#97328, Waco, TX 76798-7328, USA.}
\email{Constanze$\underline{\,\,\,}$Liaw@baylor.edu}
\author[Seco]{Daniel Seco}
\address{Departament de Matem\`atiques, Universitat Aut\`onoma de Barcelona,
08193 Bellaterra, Spain.} \email{dseco@mat.uab.cat}
\author[Sola]{Alan A. Sola}
\address{Statistical Laboratory, Centre for Mathematical Sciences,
University of Cambridge, Wilberforce Road, Cambridge CB3 0WB, UK.}
\email{a.sola@statslab.cam.ac.uk}
\thanks{CL is partially supported by the NSF grant DMS-1261687. DS
is supported by the MEC/MICINN grant MTM-2008-00145. AS acknowledges
support from the EPSRC under grant EP/103372X/1. CB, DS and AS would
like to thank the Institut Mittag-Leffler and the AXA Research Fund
for support while working on this project.  AC, CL and DS would like to thank B. Wick
for organizing the Internet Analysis Seminar at which they were initially introduced to the problem of cyclicity in the Dirichlet space.}
\date{January 17, 2013}

\keywords{Cyclicity, Dirichlet space, optimal approximation.} \subjclass[2010]{Primary: 31C25,
30H05. Secondary: 30B60; 47A16.}
\begin{abstract}
For functions $f$ in Dirichlet-type spaces $D_{\alpha}$, we study how to determine
constructively optimal polynomials $p_n$ that minimize $\|p f-1\|_\alpha$
among all polynomials $p$ of degree at most $n$.  Then we give upper and lower bounds
for the rate of decay of $\|p_{n}f-1\|_{\alpha}$ as $n$ approaches $\infty$.
Further, we study a generalization of a weak version of the
Brown-Shields conjecture and some computational phenomena about the
zeros of optimal polynomials.
\end{abstract}

\maketitle


\section{Introduction}
\subsection{Cyclicity in spaces of analytic functions}
In this paper, we study certain Hilbert spaces of analytic functions in the open unit disk
$\DD$, denoted $D_{\alpha}$ and referred to as \emph{Dirichlet-type
spaces of order $\alpha$}.  For $-\infty<\alpha<\infty$, the space $D_{\alpha}$ consists of
all analytic functions $f\colon\DD\rightarrow \CC$ whose Taylor coefficients in the expansion
\[f(z)=\sum_{k=0}^{\infty}a_kz^k, \quad z \in \DD,\]
satisfy
\begin{equation*}
\|f\|^2_\alpha=\sum_{k=0}^\infty (k+1)^{\alpha}|a_k|^2< \infty.
\end{equation*}
It is easy to see that $D_{\alpha}\subseteq D_{\beta}$ when $\alpha\geq\beta$, and
$f\in D_{\alpha}$ if and only if the derivative $f'\in D_{\alpha-2}$.

Three values of $\alpha$ correspond to spaces that have been studied
extensively and are often defined in terms of integrability:
\begin{itemize}
\item $\alpha=-1$ corresponds to {\it the Bergman space $B$}, consisting of
functions with
\[\int_{\DD}|f(z)|^2dA(z)<\infty, \quad dA(z)=\frac{dxdy}{\pi},\]
\item $\alpha=0$ corresponds to {\it the Hardy space $H^2$}, consisting of functions
with
\[\sup_{0<r<1}\ \frac{1}{2\pi}\int_{-\pi}^{\pi}|f(re^{i\theta})|^2d\theta<\infty,\]
\item and $\alpha=1$ corresponds to the usual {\it Dirichlet space $D$}
of functions $f$ with
\[\int_{\DD}|f'(z)|^2dA(z)<\infty.\]
\end{itemize}
A description similar to that of the Dirichlet space, in terms of an
integral, is possible for the $D_\alpha$ spaces for $\alpha < 2.$
Indeed, $f \in D_{\alpha}$ if and only if

\begin{equation}\label{dalfaf}
D_\alpha (f)= \int_{\DD} |f'(z)|^2 ( 1 - |z|^2)^{1 - \alpha} dA(z) <
\infty.
\end{equation}
This expression can be used to define an equivalent norm for $f \in D_{\alpha}$,
which we use in Section \ref{s-GRADIENT}. We refer the reader to the books
\cite{Durbook}, \cite{DS04} and \cite{HKZ00} for in-depth treatments of Hardy
and Bergman spaces; recent surveys concerning the Dirichlet space $D$ include
\cite{ARSW11} and \cite{Ross06}.

A function $f \in D_{\alpha}$ is said to be \emph{cyclic} in
$D_\alpha$ if the closed subspace generated by polynomial multiples
of $f$,
\[ [f]=\overline{\textrm{span}\{z^kf\colon k=0,1,2,\ldots\}},\]
coincides with $D_{\alpha}$. Note that cyclicity in
$D_\alpha$ implies cyclicity in $D_\beta$ for all $\beta < \alpha$.  The {\it multiplier space}
$M(D_{\alpha})$ consists of analytic functions $\psi$ such that
the induced operator $M_{\psi}\colon f\mapsto \psi f$ maps
$D_{\alpha}$ into itself; such a function $\psi$ is called a
\textit{multiplier}. Thus cyclic functions are precisely those that are
cyclic with respect to the operator $M_z$. Since polynomials
are dense in $D_\alpha$, we have $[1]=D_{\alpha}$. It
is well known (see \cite{BS84}) that an equivalent (and more useful)
condition for the cyclicity of $f$ is that there exist a sequence of
polynomials $\{p_n\}_{n=1}^{\infty}$ such that
\[\|p_n f-1\|_\alpha\rightarrow 0, \quad \textrm{as}\ n\rightarrow \infty.\]
We note that for certain values of $\alpha$, the multiplier
spaces of $D_{\alpha}$ are relatively easy to determine. For
$\alpha\leq 0$, we have $M(D_{\alpha})=H^{\infty}$, and when $\alpha>1$
the multiplier space coincides with $D_{\alpha}$ itself
(see \cite[p.~273]{BS84}).

In general, it is not an easy problem to characterize cyclic
functions in a space of analytic functions. However, a complete
answer to the cyclicity problem for $H^2$ (the case $\alpha=0$) is given by
a theorem of Beurling (see \cite[Chapter 7]{Durbook}): $f$ is cyclic
if and only if $f$ is an outer function. In particular, a cyclic
function $f\in H^2$ cannot vanish in $\DD$.  In the Bergman
space, the situation is considerably more complicated (see
\cite[Chapter 7]{HKZ00}). A common feature of all $D_{\alpha}$
is that cyclic functions have to be non-vanishing in $\DD$. If
$\alpha>1$, to be non-vanishing in the {\it closed}
unit disk, or equivalently,
\[|f(z)|>c>0, \quad z \in \DD,\]
is a necessary and sufficient condition (see \cite{BS84})
for cyclicity.  However,
when $\alpha\leq 1$, functions may still be cyclic if their zero set on the
boundary, that is, the unit circle $\TT$, is not too big. Here, we define the
zero set in an appropriate sense via, for instance, non-tangential limits.

In \cite{BS84}, L. Brown and A.L. Shields studied
the phenomenon of cyclicity in the Dirichlet space. In
particular, they established the following equivalent condition for
cyclicity: $f$ is cyclic in $D_{\alpha}$ if and only if there
exists a sequence of polynomials $\{p_n\}$ such that
\begin{equation}
\sup_n\|p_nf-1\|_{\alpha}<\infty
\label{cyclicitycond:unifbound}
\end{equation}
and, pointwise as $n\rightarrow \infty$,
\begin{equation}
p_n(z)f(z)\rightarrow 1, \quad z \in \DD.
\label{cyclicitycond:pointwise}
\end{equation}
Brown and Shields also obtained a number of partial results towards
a characterization of cyclic vectors in the Dirichlet space $D$.
Their starting point was a result of Beurling, stating that, for any $f \in D$,
the non-tangential limit $f^*(\zeta)=\lim_{z \rightarrow \zeta}f(z)$
exists {\it quasi-everywhere}, that is, outside a set of logarithmic
capacity zero. Brown and Shields proved that if the zeros of $f^*$,
\[\mathcal{Z}(f^*)=\{\zeta \in \TT\colon f^*(\zeta)=0\},\]
form a set of positive logarithmic capacity, then $f$ cannot be
cyclic. On the other hand, they also proved that
$(1-z)^{\beta}$ is cyclic for any $\beta>0$ and showed
that any polynomial without zeros in $\DD$ is cyclic. Hence, they
asked if being outer and having $\textrm{cap}(\mathcal{Z}(f^*))=0$ is
sufficient for $f$ to be cyclic. This problem remains open and is
commonly referred to as the {\it{Brown-Shields conjecture}}; see however
\cite{EKR09} for recent progress by El-Fallah, Kellay, and Ransford,
and for background material. Subsequent to the Brown and Shields paper, Brown and Cohn
showed (see \cite{BC85}) that sets of logarithmic capacity zero do support
zeros of cyclic functions, and later Brown (see \cite{Brown90})
proved that if $f\in D$ is invertible, that is $1/f\in D$, then $f$ is cyclic.
However, there are cyclic functions $f$ for which $1/f\notin D$,
e.g. $f(z)=1-z$.

The problem of cyclicity in $D$ has been addressed in many papers.
An incomplete list includes \cite{HS90}, where sufficient conditions
for cyclicity are given in terms of Bergman-Smirnov exceptional sets; the paper
\cite{EKR06}, where these ideas are developed further, and examples of uncountable
Bergman-Smirnov exceptional sets are found; and \cite{RS92} where multipliers and
invariant subspaces are discussed, leading, for instance, to a proof that
non-vanishing univalent functions in the Dirichlet space are cyclic.

\subsection{Plan of the paper}\label{s-Plan}

In this paper, we set out to improve understanding of cyclicity
by studying certain classes of cyclic functions in detail.  Many of the results
in this paper are variations of the following questions:  Given a cyclic
function $f \in D_{\alpha}$,
can we obtain an explicit sequence of polynomials $\{p_n\}$ such that
\[\|p_nf-1\|_{\alpha}\to 0\;\text{ as }n\to\infty?\]
Can we give an estimate on the rate of decay of these norms as $n
\rightarrow \infty$? What can we say about the approximating
polynomials?

A natural first guess is to take $\{p_n\}$ as the Taylor polynomials
of the function $1/f$. Since $1/f$ is analytic in $\DD$ by the
cyclicity assumption, we have $p_n\rightarrow 1/f$ pointwise, and
hence \eqref{cyclicitycond:pointwise} is satisfied. However, it may
be the case that norm boundedness
in \eqref{cyclicitycond:unifbound} fails.  This is certainly
true for the Taylor polynomials $T_n(1/f)$ in the case $f(z)=1-z$; indeed, $1/f\notin B\supset H^2\supset D$ and a computation
shows that
\[\|T_n(1/f)f-1\|^2_{D}=\|z^{n+1}\|^2_{D}=n+2.\]

Much of the development that follows is motivated by our goal of
finding concrete substitutes for the Taylor polynomials of $1/f$.

\begin{defn}
Let $f \in D_\alpha$. We say that a polynomial $p_n$ of degree at most $n$
is an {\it optimal approximant} of order $n$ to $1/f$ if $p_n$ minimizes
$\|p f-1\|_\alpha$ among all polynomials $p$ of degree at most $n$.  We call
$\|p_{n}f-1\|_{\alpha}$ the \emph{optimal norm} of degree $n$ associated with $f$.
\end{defn}

In other words, $p_{n}$ is an optimal polynomial of order $n$ to $1/f$ if
\[  \|p_{n}f-1\|_{\alpha}=\dist_{D_{\alpha}}(1,f\cdot\Pol_{n}), \]
where $\Pol_{n}$ denotes the space of polynomials of degree at most $n$ and
\[  \dist_{X}(x,A)=\inf\{\|x-a\|_X: a\in A\}    \]
for any normed space $X$, $A\subseteq X$ and $x\in X$.

Notice that, given $f\in D_{\alpha}\setminus\{0\}$, the existence and uniqueness of
an optimal approximant of order $n$ to $1/f$ follows immediately from the fact that
$f\cdot\Pol_{n}$ is a finite dimensional subspace of the Hilbert space $D_{\alpha}$.
Thus, $f$ is cyclic if and only if the optimal approximants $p_n$ of order $n$ to $1/f$
satisfy $\|p_n f -1\|_\alpha\rightarrow 0$ as $n\rightarrow\infty$.  Furthermore, since
$\|p_n f-1\|_\alpha \leq \|f-1\|_\alpha,$ it follows from
\eqref{cyclicitycond:unifbound} and \eqref{cyclicitycond:pointwise} that $f$ is cyclic
if and only if the sequence of optimal approximants $\{p_n\}_{n=1}^{\infty}$ converges pointwise to $1/f$.

In Section \ref{s-GRADIENT}, we describe a constructive approach for
computing the coefficients of the optimal approximant of order $n$
to $1/f$ for a general function $f$. In particular, Theorem
\ref{t-LINEAR} below states that the coefficients of the optimal
approximants can be computed as ratios of determinants of matrices
whose entries can be explicitly computed via the moments of the
derivative of $f$.  When $f$ itself is a polynomial, these matrices
are banded (see Proposition \ref{t-2tdiagonal}).  As a simple
but fundamental example, we compute optimal approximants to the
function $1/f$ when $f(z)=1-z$.

We are also interested in the rate of convergence of optimal norms.
Since optimal norms decay exponentially for any function $f$ such that $1/f$ is
analytic in the closed unit disk, functions that have zeros on the unit circle
are of particular interest.
In Section \ref{OptimalNormBounds}, we examine the question of whether all functions
with no zeros in the open unit disk but with zeros on the boundary, admitting an
analytic continuation to the closed disk, have optimal norm
achieving a similar rate of decay.
In Theorem \ref{OptimalRate}, we prove that this is indeed the case.

In Section \ref{log}, we deal with a generalization to all $D_\alpha$ of a
subproblem of the Brown-Shields conjecture. We ask the question whether a
function $f$ satisfying $f \in D_\alpha$ and $\log f \in D_\alpha$, must
be cyclic in $D_\alpha$. We note that this is true in the simple cases
of $\alpha = 0$ or $\alpha>1$. In Theorem \ref{solvelog} we are able to
answer affirmatively in the case
$\alpha=1$. Then, Theorem \ref{cyclic1} shows that for the case
$\alpha < 1, \alpha \neq 0,$ the same holds with an additional
technical condition. We do not know if this condition is necessary;
however, it is satisfied by a large class of examples, namely, all of
the functions constructed in Brown-Cohn (\cite{BC85}).

We conclude, in Section 5, by presenting some open questions and basic
computations connected to the zero sets $\mathcal{Z}(p_n)$ of the optimal
approximants $p_n$ of $1/f$ for cyclic functions $f$.

\section{Construction of optimal approximants}\label{s-GRADIENT}

The optimal approximants $p_n$ of order $n$ to $1/f$ are determined by the fact that
$p_nf$ is the orthogonal projection of $1$ onto the space $f\cdot\Pol_n$, and hence,
in principle, if $f\in D_{\alpha}\setminus\{0\}$, they can be computed using the
Gram-Schmidt process.  More precisely, once
a basis for $f\cdot\Pol_n$ is chosen, one can construct an orthonormal basis for
$f\cdot\Pol_{n}$ and then compute the coefficients of $p_n$ with respect to this
orthonormal basis.

In this section, we present a simple method which yields the optimal approximants
$p_n$ without the use of the Gram-Schmidt process, for $\alpha <2$. To that end,
we make use of the integral norm of $D_{\alpha}$, namely,
\[  \|f\|_{\alpha}^{2}=|f(0)|^{2}+D_{\alpha}(f),    \]
where $D_\alpha(f)$ was defined in \eqref{dalfaf}.

Recall that we seek an explicit solution to

\emph{Problem 1.}  Let $n\in\NN$.  Given $f\in D_{\alpha}\setminus\{0\}$ such that
$1\notin f\cdot\Pol_{n}$,
\[  \text{ minimize }\quad\|p f-1\|_{\alpha}\;\text{ over }p\in\Pol_{n}.    \]

\subsection{Construction of optimal approximants via determinants}

As mentioned in Section \ref{s-Plan}, there is a unique optimal approximant $p_{n}\in\Pol_{n}$
of order $n$ to $1/f$ that solves Problem 1, that is,
\[  \|p_{n}f-1\|_{\alpha}=\dist_{D_{\alpha}}(1,f\cdot\Pol_{n}). \]
Observe that for any polynomial $p(z) =\sum_{k=0}^{n}c_{k}z^{k}\in\Pol_{n}$,
\begin{align*}
      \|pf-1\|_{\alpha}^{2}
      &=|p(0)f(0)-1|^{2}+\int_{\DD}|(pf)^{\prime}|^{2}d\mu_{\alpha}\\
      &=|p(0)f(0)-1|^{2}+\int_{\DD}\left|\sum_{k=0}^{n}c_{k}(z^{k}f)^{\prime}\right|^{2}
      d\mu_{\alpha},
\end{align*}
where $d\mu_{\alpha}(z)=(1-|z|^{2})^{1-\alpha}\, dA(z)$.
It follows that if the optimal approximant of order $n$ to $1/f$ vanishes at the origin,
then $\|pf-1\|_{\alpha}^{2}$ is minimal if and only if $c_{0}=c_{1}=\ldots=c_{n}=0$.
Consequently, we may assume without loss of generality that the optimal approximant
$p_{n}$ of order $n$ to $1/f$ does not vanish at the origin.  By replacing $f$ with
$p_{n}(0)f$, we may also assume that $p_{n}(0)=1$ because the optimal approximant
of order $n$ to $1/(p_{n}(0)f)$ is $[p_{n}(0)]^{-1}p_{n}$.  Hence, under this latter
assumption, $p_{n}(z) =1+\sum_{k=1}^{n}c_{n}^{*}z^{k}$ is the optimal approximant of
order $n$ to $1/f$ if and only if $(c_1^{*},\ldots,c_n^{*})\in\CC^{n}$ is the unique
solution to

\emph{Problem 2.}  Let $n\in\NN$.  Given $f\in D_{\alpha}\setminus\{0\}$ such that
$1\notin f\cdot\Pol_{n}$,
\[  \text{ minimize }\quad
        \int_{\DD}\left|f^{\prime}+\sum_{k=1}^{n}c_{k}(z^{k}f)^{\prime}\right|^{2}
        d\mu_{\alpha}\text{ over }(c_1,\ldots,c_{n})\in\CC^{n}. \]
It is evident that $(c_1^{*},\ldots,c_n^{*})\in\CC^{n}$ is the unique solution to
Problem 2 if and only if
\[  g=\sum_{k=1}^{n}c_{k}^{*}(z^{k}f)^{\prime}\;\text{ satisfies }\;
        \|f^{\prime}+g\|_{L^{2}(\mu_{\alpha})}=\dist_{L^{2}(\mu_{\alpha})}(f^{\prime},Y),   \]
where $Y=\Span\{(z^{k} f)^{\prime}: 1\leq k\leq n\}$. Equivalently, $f^{\prime}+g$ is
orthogonal to $Y$ with respect to the $L^{2}(\mu_{\alpha})$ inner product; that is,
for each $j$, $1\leq j\leq n$,
\[  \langle -f^{\prime}, (z^{j} f)^{\prime}\rangle_{L^{2}(\mu_{\alpha})}
        =\langle g, (z^{j} f)^{\prime}\rangle_{L^{2}(\mu_{\alpha})}.    \]
Hence, $(c_1^{*},\ldots,c_n^{*})\in\CC^{n}$ is the unique solution to Problem 2 if and
only if it is the solution to the non-homogeneous system of linear equations
\begin{equation}\label{keySystem}
            \sum_{k=1}^{n}c_{k}
            \langle (z^{k} f)^{\prime},(z^{j}f)^{\prime}\rangle_{L^{2}(\mu_{\alpha})}
            =\langle -f^{\prime}, (z^{j} f)^{\prime}\rangle_{L^{2}(\mu_{\alpha})},\quad 1\leq k\leq n,
\end{equation}
with $(c_1,\ldots,c_{n})\in\CC^{n}$.

\begin{thm}\label{t-LINEAR}
            Let $n\in\NN$ and $f\in D_{\alpha}\setminus\{0\}$.  Suppose
            $1\notin f\cdot\Pol_{n}$ and let $M$ denote the $n\times n$ matrix with entries
            $\langle (z^{k} f)^{\prime},(z^{j}f)^{\prime}\rangle_{L^{2}(\mu_{\alpha})}$.  Then
            the unique $p_{n}\in\Pol_{n}$ satisfying
            \[  \|p_{n}f-1\|_{\alpha}=\dist_{D_{\alpha}}(1,f\cdot\Pol_{n})  \]
            is given by
            \begin{equation}\label{pnFormula}
                        p_{n}(z)=p_{n}(0) \left( 1 +\sum_{k=1}^{n} \frac{\det M^{(k)}}{\det M}z^{k}\right),
            \end{equation}
            where $M^{(k)}$ denotes the $n\times n$ matrix obtained from $M$ by replacing the
            $k$th column of $M$ by the column with entries
            $\langle -f^{\prime}, (z^{j} f)^{\prime}\rangle_{L^{2}(\mu_{\alpha})}$, $1\leq j\leq n$.
\end{thm}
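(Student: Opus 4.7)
The plan is to leverage the reduction already carried out in the paragraphs preceding the theorem and finish off what remains by a short linear algebra argument. The discussion has shown that, after normalizing to $p_n(0)=1$ (which is permitted by replacing $f$ with $p_n(0)f$, provided $p_n(0)\neq 0$, a point I would verify separately using that $1\notin f\cdot\Pol_n$ forces the minimum to be attained by a nonzero polynomial not vanishing at $0$), the coefficient vector $(c_1^{*},\ldots,c_n^{*})$ of the optimal approximant is characterized as the unique solution of the linear system \eqref{keySystem}, whose coefficient matrix is precisely $M$ and whose right-hand side is the column with entries $\langle -f^{\prime},(z^{j}f)^{\prime}\rangle_{L^{2}(\mu_{\alpha})}$.

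First, I would verify that $M$ is invertible. The matrix $M$ is the Gram matrix of the system $\{(z^{k}f)^{\prime}\}_{k=1}^{n}$ in $L^{2}(\mu_{\alpha})$, so it suffices to show these vectors are linearly independent. Suppose $\sum_{k=1}^{n}c_{k}(z^{k}f)^{\prime}=0$ in $L^{2}(\mu_{\alpha})$. Then the analytic function $h(z)=\sum_{k=1}^{n}c_{k}z^{k}f(z)$ has $D_{\alpha}(h)=0$, hence $h$ is constant; evaluating at the origin gives $h\equiv 0$, so $\bigl(\sum_{k=1}^{n}c_{k}z^{k}\bigr)f=0$ on $\DD$. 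Since $f\in D_{\alpha}\setminus\{0\}$, the polynomial $\sum_{k=1}^{n}c_{k}z^{k}$ must vanish identically, forcing $c_{1}=\cdots=c_{n}=0$. Hence $M$ is positive definite, and in particular $\det M\neq 0$.

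Next, I would apply Cramer's rule to the system \eqref{keySystem} to obtain
\[
c_{k}^{*}=\frac{\det M^{(k)}}{\det M},\qquad 1\leq k\leq n,
\]
where $M^{(k)}$ is $M$ with its $k$th column replaced by the right-hand side of \eqref{keySystem}, matching the definition given in the statement. Substituting these values into $p_{n}(z)=1+\sum_{k=1}^{n}c_{k}^{*}z^{k}$ yields the normalized formula, and multiplying by $p_{n}(0)$ to undo the initial rescaling produces exactly \eqref{pnFormula}.

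I do not foresee a substantial obstacle here: the proof is essentially linear algebra once the orthogonality conditions \eqref{keySystem} are in hand. The most delicate point is the justification that the normalization $p_{n}(0)\neq 0$ is compatible with optimality (so that the reduction is lossless), which follows from the observation made in the paper that, were $p_{n}(0)=0$, minimality of $\|pf-1\|_{\alpha}^{2}$ would force all coefficients of $p_n$ to vanish, contradicting $1\notin f\cdot\Pol_{n}$.
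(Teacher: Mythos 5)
Your proof is correct and follows essentially the same route as the paper's: reduce to the linear system \eqref{keySystem} via the normalization $p_{n}(0)=1$, check $\det M\neq 0$, and apply Cramer's rule. The only (harmless) difference is that you justify the invertibility of $M$ directly, as the positive-definite Gram matrix of the linearly independent vectors $(z^{k}f)^{\prime}$, whereas the paper infers $\det M\neq 0$ from the uniqueness of the solution to the system.
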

\begin{proof}
      As mentioned before, if $p_n$ is the optimal approximant of order $n$ to $f$ and
      $p_{n}(0)\neq 0$, then the optimal approximant of order $n$ to $1/f_n$
      is $[p_{n}(0)]^{-1}p_{n}$, where
      $f_{n}=p_{n}(0)f$.  If $[p_{n}(0)]^{-1}p_{n}(z)
      =1+\sum_{k=1}^{n}c_{k}^{*}z^{k}$, then $(c_1^{*},\ldots,c_n^{*})\in\CC^{n}$ is
      the unique solution to the system in \eqref{keySystem} because
      \[  \langle (z^{k} f_{n})^{\prime},(z^{j}f_{n})^{\prime}\rangle_{L^{2}(\mu_{\alpha})}
                    =|p_{n}(0)|^{2}\langle (z^{k} f)^{\prime},(z^{j}f)^{\prime}\rangle_{L^{2}(\mu_{\alpha})}    \]
      for $0\leq k\leq n$ and $1\leq j\leq n$.  It follows now that the $n\times n$ matrix
      $M$ with entries $\langle (z^{k} f)^{\prime},(z^{j}f)^{\prime}\rangle_{L^{2}(\mu_{\alpha})}$
      has non-zero determinant and thus
      \[  c_{k}^{*}=\frac{\det M^{(k)}}{\det M},\quad 1\leq k\leq n,  \]
      by Cramer's rule, where $M^{(k)}$ denotes the $n\times n$ matrix obtained from $M$
      by replacing the $k$th column of $M$ by the column with entries
      $\langle -f^{\prime}, (z^{j} f)^{\prime}\rangle_{L^{2}(\mu_{\alpha})}$,
      $1\leq j\leq n$.  Hence $p_{n}$ is given by \eqref{pnFormula}.
\end{proof}

If $f$ is a polynomial, then the computation of the determinants appearing in
\eqref{pnFormula} can be simplified in view of the following proposition.

\begin{prop}\label{t-2tdiagonal}
Suppose $f$ is a polynomial of degree $t$.  Then the matrix $M$ in Theorem \ref{t-LINEAR}
is banded and has bandwidth at most $2t+1$.
\end{prop}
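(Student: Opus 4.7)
The plan is to exploit the rotational invariance of the measure $\mu_\alpha$ to reduce the claim to a degree-counting argument. First, I would observe that $d\mu_\alpha(z)=(1-|z|^2)^{1-\alpha}dA(z)$ is invariant under rotations $z\mapsto e^{i\theta}z$. Writing the $L^2(\mu_\alpha)$ inner product of two monomials in polar coordinates gives
\[
\langle z^m, z^\ell\rangle_{L^2(\mu_\alpha)}
= \int_0^1 r^{m+\ell+1}(1-r^2)^{1-\alpha}\,dr\cdot\frac{1}{\pi}\int_0^{2\pi} e^{i(m-\ell)\theta}\,d\theta,
\]
which vanishes whenever $m\neq\ell$. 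Hence the monomials $\{z^m\}_{m\geq 0}$ form an orthogonal system in $L^2(\mu_\alpha)$, and so two polynomials in $z$ are $L^2(\mu_\alpha)$-orthogonal as soon as the sets of exponents appearing in them are disjoint.

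Next, I would identify the support of the monomial expansion of $(z^kf)'$. Writing $f(z)=\sum_{i=0}^{t}a_i z^i$ with $a_t\neq 0$, we have $z^kf(z)=\sum_{i=0}^t a_i z^{k+i}$, and therefore
\[
(z^kf)'(z)=\sum_{i=0}^{t}(k+i)\,a_i\, z^{k+i-1},
\]
which is supported on exponents in the interval $[k-1,\,k+t-1]$. The analogous statement holds for $(z^jf)'$ with exponents in $[j-1,\,j+t-1]$.

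Combining these two observations, the entry $M_{j,k}=\langle (z^kf)',(z^jf)'\rangle_{L^2(\mu_\alpha)}$ is a sum of inner products of monomials $z^{k+i-1}$ and $z^{j+i'-1}$ with $0\leq i,i'\leq t$, each of which vanishes unless $k+i-1=j+i'-1$. Such an $(i,i')$ with $0\leq i,i'\leq t$ exists if and only if $|k-j|\leq t$. Consequently $M_{j,k}=0$ whenever $|j-k|>t$, so the nonzero band of $M$ consists of the main diagonal together with at most $t$ superdiagonals and $t$ subdiagonals, i.e.\ a bandwidth of at most $2t+1$, which is exactly the claim. There is no real obstacle here beyond keeping track of index shifts; the content of the proof is the rotational invariance plus a one-line exponent count.
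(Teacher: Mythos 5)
Your argument is correct and is essentially the paper's own proof: both rest on the orthogonality of distinct monomials in $L^2(\mu_\alpha)$ together with the observation that $(z^kf)'$ only involves exponents between $k-1$ and $k+t-1$, so the $(j,k)$-entry vanishes once $|j-k|>t$. The only difference is that you spell out the polar-coordinate computation behind the orthogonality, which the paper takes for granted.
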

\begin{proof}
The orthogonality of $z^l$ and $z^m$ for $l \neq m$ (under the $L^{2}(\mu_{\alpha})$
inner product) implies that the $(j,k)$-entry of $M$ equals $0$ if the degree of
$(z^{k}f)^{\prime}$ is strictly less than $j-1$ or if the degree of $(z^{j} f)^{\prime}$
is strictly less than $k-1$; that is, $k+t-1<j-1$ or $j+t-1<k-1$.  Therefore, the only
entries of $M$ that do not necessarily vanish are the ones whose indices $j$ and $k$
satisfy $-t\leq j-k\leq t$.  Thus, $M$ is banded and has bandwidth at most $2t+1$.
\end{proof}

\subsection{An explicit example of optimal approximants}\label{optimalPnEx} Now, we calculate explicitly optimal approximants to $1/f$,
where $f(z)=1-z$.  Even though $f$ is a low order polynomial, this
example is already interesting because $f$ is cyclic in $D_\alpha$
for $\alpha \le 1$, even though it is not invertible for any
$\alpha\ge -1$.

We begin with some general computations.  Let $\beta=1-\alpha$.
Then
\begin{equation}
      \|z^{m}\|_{L^{2}(\mu_{\alpha})}^{2}=\int_{0}^{1}u^{1-\alpha}(1-u)^{m}\,du\nonumber =\prod_{\ell=1}^m \frac{\ell}{\ell+\beta}\label{zmL2Norm}
\end{equation}
holds for any non-negative integer $m$.  Therefore, if
$f(z) =\sum_{i=0}^{t}a_{i}z^{i}$, we have\footnote{Under the usual
convention that $a_{i}=0$ for any integer $i<0$ or $i>t$.}
\begin{align}
      \langle (z^{k} f)^{\prime},(z^{j}f)^{\prime}\rangle_{L^{2}(\mu_{\alpha})}
      &=\sum_{i=0}^{t}\sum_{\ell=0}^{t}a_{i}\bar{a}_{\ell}(i+k)(\ell+j)
      \langle z^{i+k-1},z^{\ell+j-1}\rangle_{L^{2}(\mu_{\alpha})}\nonumber\\
      &=\sum_{i=0}^{t}a_{i}\bar{a}_{i+k-j}(i+k)^{2}\|z^{i+k-1}\|_{L^{2}(\mu_{\alpha})}^2\nonumber\\
      &=\sum_{i=0}^{t}a_{i}\bar{a}_{i+k-j}
      (i+k)\prod_{\ell=1}^{i+k} \frac{\ell}{\ell+\beta}\label{L2InnerProducts}
\end{align}
because $z^l$ and $z^m$ are orthogonal for $l \neq m$ under the
$L^{2}(\mu_{\alpha})$ inner product.

We simplify notation by calling, for $k \in \NN$,
$$\Lambda_\beta(k)= k\prod_{\ell=1}^k\frac{\ell}{\ell+\beta}.$$
  Since
$a_0=1$ and $a_1=-1$, it follows from \eqref{L2InnerProducts} that
\begin{align*}
      \langle (z^{k} f)^{\prime},(z^{k-1}f)^{\prime}\rangle_{L^{2}(\mu_{\alpha})}
      &=-\Lambda_\beta(k),\\
            \langle (z^{k} f)^{\prime},(z^{k}f)^{\prime}\rangle_{L^{2}(\mu_{\alpha})}
      &=\Lambda_\beta(k)+\Lambda_\beta(k+1),\\
      \langle (z^{k} f)^{\prime},(z^{k+1}f)^{\prime}\rangle_{L^{2}(\mu_{\alpha})}
      &=-\Lambda_\beta(k+1),\;\text{ and }\\
      \langle -f^{\prime},(z^{j}f)^{\prime}\rangle_{L^{2}(\mu_{\alpha})}
      &=\left\{
      \begin{array}{ll}
            \Lambda_\beta(1)   &   \text{ if }j=1\\
            0                                   &   \text{ if }j\geq 2.
      \end{array}\right.
\end{align*}
 Thus, in view of \eqref{keySystem}, the
coefficients of $p_{n}$ satisfy the sytem of equations
\begin{align*}
      c_{1}\left[\Lambda_\beta(1)+\Lambda_\beta(2)\right]
      -c_{2}\left[\Lambda_\beta(2)\right]&=\Lambda_\beta(1)\\
      -c_{j-1}\left[\Lambda_\beta(j)\right]+
      c_{j}\left[\Lambda_\beta(j)+\Lambda_\beta(j+1)\right]
      -c_{j+1}\left[\Lambda_\beta(j+1)\right]&=0\\
      -c_{n-1}\left[\Lambda_\beta(n)\right]
      +c_{n}\left[\Lambda_\beta(n)+\Lambda_\beta(n+1)\right]&=0
\end{align*}
or, interpreting $c_{n+1}=0$, equivalently, for all $2\leq j \leq
n+1$:
$$\Lambda_\beta(j)(c_j - c_{j-1}) = \Lambda_\beta(1) (c_1-1).$$
For fixed $k$, $2\leq k\leq n+1$, by a repeated use of the previous
identity, we obtain the following:
\begin{equation}\label{generalCkFormula}
c_{k}=\left[\Lambda_\beta(1) \sum_{j=1}^k
\frac{1}{\Lambda_\beta(j)}\right](c_1-1)+1
\end{equation}

In particular, we have
\[  c_1-1=-\Lambda_\beta(n+1)c_{n} \]
and so, we can recover the value of $c_1$,
$$c_1-1=-\frac{1}{\Lambda_\beta(1)\sum_{j=1}^{n+1}\frac{1}{\Lambda_\beta(j)}}.$$
Finally, we obtain the explicit solution, which can be expressed as
follows, for $1\leq k\leq n$:
\begin{equation}
      c_{k}=\left[\sum_{j=k+1}^{n+1}\frac{1}{j}
            \prod_{\ell=2}^j \left(1+\frac{\beta}{\ell}\right)\right]
            \left[\sum_{j=1}^{n+1}\frac{1}{j}
            \prod_{\ell=2}^j\left(1+\frac{\beta}{\ell}\right)\right]^{-1}.
      \label{CkFormulaWithBetas}
\end{equation}

Alternatively, in the case of the Dirichlet space, we can compute the coefficients $c_k$, $1\leq k \leq n$, using determinants as follows.
For $n\in\NN$,
      let $M_{n}=M$ and $M^{(k)}_{n}=M^{(k)}$ be the $n\times n$ matrices corresponding
      to $f$ as in Theorem \ref{t-LINEAR}.  By Proposition \ref{t-2tdiagonal}, the
      matrix $M_{n}$ is tridiagonal and so it suffices to compute the coefficients
      above and below each entry of its main diagonal.  The
      coefficients in the $j$th column of $M_{n}$ are given by
      \[  \langle (z^{j+\ell} f)^{\prime},(z^{j}f)^{\prime}\rangle_{L^{2}}
              =a_{0}\bar{a}_{\ell}(j+\ell)+a_{1}\bar{a}_{1+\ell}(j+\ell+1)    \]
      where $\ell=-1,0,1$.  Since $a_0=1$ and $a_1=-1$, we obtain
      \begin{align*}
                  \langle (z^{j-1} f)^{\prime},(z^{j}f)^{\prime}\rangle_{L^{2}}
                  &=-j,\\
                  \langle (z^{j} f)^{\prime},(z^{j}f)^{\prime}\rangle_{L^{2}}
                  &=2j+1,\\
                  \langle (z^{j+1} f)^{\prime},(z^{j}f)^{\prime}\rangle_{L^{2}}
                  &=-(j+1),\\
                  \text{ and }\;\langle -f^{\prime},(z^{j}f)^{\prime}\rangle_{L^{2}}
                  &=\bar{a}_{1-j}.
      \end{align*}
      Consequently,
      \[  M_{1}=3,\qquad M_{1}^{(1)}=1,   \]
      \begin{align*}
                  M_{2}&=\left(
                  \begin{array}{cc}
                              3       &   -2\\
                              -2  &   5
                  \end{array}\right),\qquad
                  M_{2}^{(1)}=\left(
                  \begin{array}{cc}
                              1       &   -2\\
                              0       &   5
                  \end{array}\right),\qquad
                  M_{2}^{(2)}=\left(
                  \begin{array}{cc}
                              3       &   1\\
                              -2  &   0
                  \end{array}\right)
      \end{align*}
      \begin{align*}
                  M_{3}=\left(
                  \begin{array}{ccc}
                              3       &   -2  &   0\\
                              -2  &   5       &   -3\\
                              0       &   -3  &   7
                  \end{array}\right),\qquad
                  &M_{3}^{(1)}=\left(
                  \begin{array}{ccc}
                              1       &   -2  &   0\\
                              0       &   5       &   -3\\
                              0       &   -3  &   7
                  \end{array}\right), \, \hdots
      \end{align*}
      \begin{align*}
                  M_{4}=\left(
                  \begin{array}{cccc}
                              3       &   -2  &   0       &   0\\
                              -2  &   5       &   -3  &   0\\
                              0       &   -3  &   7       &   -4\\
                              0       &   0       &   -4  &   9
                  \end{array}\right),\qquad
                  &M_{4}^{(1)}=\left(
                  \begin{array}{cccc}
                              1       &   -2  &   0       &   0\\
                              0       &   5       &   -3  &   0\\
                              0       &   -3  &   7       &   -4\\
                              0       &   0       &   -4  &   9
                  \end{array}\right), \, \hdots
      \end{align*}
      Thus, the optimal approximants to $f$ of orders 1, 2, 3, and 4 are
      \begin{align*}
                  p_{1}(z)&=p_{1}(0)\left(1+\frac{1}{3}z\right),\\
                  p_{2}(z)&=p_{2}(0)\left(1+\frac{5}{11}z+\frac{2}{11}z^2\right),\\
                  p_{3}(z)&=p_{3}(0)\left(1+\frac{13}{25}z+\frac{7}{25}z^2+\frac{3}{25}z^3\right),
                  \;\text{ and }\\
                  p_{4}(z)&=p_{4}(0)\left(1+\frac{77}{137}z+\frac{47}{137}z^2
                  +\frac{27}{137}z^3+\frac{12}{137}z^4\right).
      \end{align*}

What we have shown is that, for any integer $n,$ the optimal approximant for the Dirichlet space is an example of a generalized
Riesz mean polynomial: more specifically, defining
$H_{n}=\sum_{j=1}^{n}\frac{1}{j}$ and $H_0 = 0,$
$$ p_n(z) = p_{n}(0) \left(\sum_{k=0}^{n}\left(1-\frac{H_{k}}{H_{n+1}}\right)z^{k}\right).$$

Moreover, for the Hardy space, the optimal
approximant is a modified Ces\`{a}ro mean polynomial,
$$ p_n(z) =  p_{n}(0)\left(\sum_{k=0}^{n}
                  \left(1-\frac{k+H_{k}}{n+1+H_{n+1}}\right)z^{k}\right),$$
and for the Bergman space, the optimal approximants are
$$ p_n(z) = p_{n}(0)\left(1+\sum_{k=1}^{n}
                  \left(1-\frac{k(k+7)+4H_{k}}{(n+1)(n+8)+4H_{n+1}}\right)z^{k}\right).$$
We will return to these polynomials in Section
\ref{OptimalNormBounds}.

\section{Rate of decay of the optimal norms}\label{OptimalNormBounds}

In this section, we obtain estimates for $\dist_{D_{\alpha}}(1,f\cdot\Pol_{n})$
as $n\rightarrow\infty$, $f\in D_{\alpha}$.  It turns out that the example of $f(z) = 1-z$
in the previous section is a model example for the rate of decay of $\dist_{D_{\alpha}}(1,f\cdot\Pol_{n})$.  We first examine the rate of decay
for this function, then establish such estimates
when $f$ is a polynomial whose zeros are simple and lie in $\CC\setminus\DD$, and then
extend our results to arbitrary polynomials. We conclude with estimates on functions that
admit an analytic continuation to the closed unit disk yet have at least one zero on the circle.

To simplify notation, define the auxiliary function
$\varphi_{\alpha}$ on $[0,\infty)$ to be
\[  \varphi_{\alpha}(s)=\left\{
        \begin{array}{ll}
                    s^{1-\alpha},\;&\text{ if }\alpha<1\\
                    \log^+(s),&\text{ if }\alpha=1.
        \end{array}\right.  \]

\begin{lem}\label{sharp1}
If $f(z) = \zeta - z,$ for $\zeta \in \TT,$ then $\dist_{D_{\alpha}}^{2}(1,f\cdot\Pol_{n})$ is comparable to
$\varphi_{\alpha}^{-1}(n+1)$ for all sufficiently large $n$.
\end{lem}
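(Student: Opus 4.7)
My plan is to exploit the closed-form computation carried out in Section \ref{optimalPnEx}. By rotation invariance of the $D_\alpha$ norm, it suffices to treat $\zeta=1$, so take $f(z)=1-z$. The telescoping relation already established there,
\[
\Lambda_\beta(j)(c_j - c_{j-1}) = \Lambda_\beta(1)(c_1-1),\quad 2\le j\le n+1,
\]
together with the boundary values $c_0 = 1$, $c_{n+1}=0$, and $c_1-1 = -[\Lambda_\beta(1) S_n]^{-1}$ (with $S_n := \sum_{j=1}^{n+1} \Lambda_\beta(j)^{-1}$ and $\beta = 1-\alpha$), yields the clean expression
\[
p_n(z) f(z) - 1 = \sum_{j=1}^{n+1} b_j z^j, \qquad b_j = -\frac{1}{\Lambda_\beta(j)\, S_n}.
\]

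Since the remainder has no constant term, the Taylor-series formula for the $D_\alpha$ norm produces the exact identity
\[
\dist_{D_\alpha}^2(1, f\cdot\Pol_n) = \|p_n f-1\|_\alpha^2 = \frac{1}{S_n^2}\sum_{j=1}^{n+1} \frac{(j+1)^\alpha}{\Lambda_\beta(j)^2},
\]
and the proof is reduced to an asymptotic analysis of the right-hand side. Rewriting $\Lambda_\beta(j) = j\,\Gamma(1+\beta)\Gamma(j+1)/\Gamma(j+1+\beta)$ and invoking Stirling's formula, one obtains $\Lambda_\beta(j)\asymp j^\alpha$ uniformly in $j\ge 1$. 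Consequently both $S_n$ and $\sum_{j=1}^{n+1}(j+1)^\alpha/\Lambda_\beta(j)^2$ are comparable to the partial sum $\sigma_n := \sum_{j=1}^{n+1} j^{-\alpha}$, giving
\[
\|p_n f - 1\|_\alpha^2 \asymp \sigma_n^{-1}.
\]

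Elementary one-variable estimates then yield $\sigma_n \asymp (n+1)^{1-\alpha}$ for $\alpha<1$ and $\sigma_n \asymp \log(n+1)$ for $\alpha=1$; both match $\varphi_\alpha(n+1)$ in the notation of the paper, completing the argument. The main obstacle is the uniform Stirling-based asymptotic $\Lambda_\beta(j)\asymp j^\alpha$, since the constants arising from the numerator sum and from $S_n^2$ must be absorbed into a single comparability constant to obtain the sharp two-sided bound. Beyond that, everything reduces to algebraic manipulations on the two-term polynomial $f(z)=1-z$, so no additional approximation-theoretic machinery is required.
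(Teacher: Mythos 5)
Your argument is essentially the paper's own proof: reduce to $\zeta=1$ by rotation invariance, read off the difference coefficients $c_j-c_{j-1}=-1/(\Lambda_\beta(j)S_n)$ from the telescoping identity of Section 2.2, and estimate the resulting weighted $\ell^2$ sum using $\Lambda_\beta(j)\asymp j^{\alpha}$ (the paper derives this from $2^{-1}x\le\log(1+x)\le x$ rather than Stirling, which is cosmetic). The only overstatement --- one the paper's proof shares --- is calling your formula an \emph{exact} identity for $\dist_{D_\alpha}^2(1,f\cdot\Pol_n)$: the coefficients \eqref{CkFormulaWithBetas} minimize the equivalent integral norm subject to the normalization $p(0)=1$, so strictly speaking your computation gives the upper bound directly, and the lower bound only after observing that releasing the constraint on $p(0)$ (and passing between the two equivalent norms) changes the minimum by at most a bounded factor.
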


\begin{proof}
First notice that, for any polynomial $p$ and $\zeta \in \TT$, the polynomial
$q(z)=\zeta p(\zeta z)$ satisfies $\|p(z)(\zeta-z)-1\|_{\alpha}=\|q(z)(1-z)-1\|_{\alpha}$
because rotation by $\zeta$ is an isometry in $D_{\alpha}$.  Therefore, it is enough
to consider the case when $\zeta=1$, i.e. $f(z) = 1 - z$.

Now, recall that by \eqref{CkFormulaWithBetas}, if $f(z)=1-z$, the
optimal approximant of order $n$ to $1/f$ is
\[  p_{n}(z)=p_{n}(0)\sum_{k=0}^{n}c_{k}z^{k},  \]
where
\begin{equation*}
            c_k=\left[\sum_{j=k+1}^{n+1}\frac{1}{j}
            \prod_{\ell=2}^j \left(1+\frac{\beta}{\ell}\right)\right]
            \left[\sum_{j=1}^{n+1}\frac{1}{j}
            \prod_{\ell=2}^j\left(1+\frac{\beta}{\ell}\right)\right]^{-1},
            \;0\leq k\leq n,
\end{equation*}
and $\beta=1-\alpha$.  We claim that $\|p_{n}f-1\|_{\alpha}^{2}$ is comparable
to $\varphi_{\alpha}^{-1}(n+1)$ for all sufficiently large $n$.

First of all, notice that
\[  p_{n}(z)f(z)-1=p_{n}(0)-1+p_{n}(0)
        \left[\sum_{k=1}^{n}(c_{k}-c_{k-1})z^{k}-c_{n}z^{n+1}\right].   \]
To simplify notation, define for $1\leq k\leq n$
\[  a_{k}=c_{k}-c_{k-1}=-
        \left[\frac{1}{k}\prod_{\ell=2}^k \left(1+\frac{\beta}{\ell}\right)\right]
        \left[\sum_{j=1}^{n+1}\frac{1}{j}
        \prod_{\ell=2}^j\left(1+\frac{\beta}{\ell}\right)\right]^{-1}
            \]
and $a_{n+1}=-c_{n}$.  Then
\begin{equation}\label{sumAks}
            \sum_{k=1}^{n}k^{\alpha}|a_{k}|^{2}
            =\left[\sum_{j=1}^{n+1}
            \frac{1}{j}\prod_{\ell=2}^j\left(1+\frac{\beta}{\ell}\right)\right]^{-2}
            \sum_{k=1}^{n}k^{\alpha-2}
            \left[\prod_{\ell=2}^k \left(1+\frac{\beta}{\ell}\right)\right]^{2}.
\end{equation}
Recalling that $2^{-1}x\leq\log(1+x)\leq x$ holds for all $x\in[0,1]$, we see that
\[  \prod_{\ell=2}^k \left(1+\frac{\beta}{\ell}\right)
        =\exp\left[\sum_{\ell=2}^k \log \left(1+\frac{\beta}{\ell}\right)\right]    \]
is comparable to
\[  \exp\left[\beta\sum_{\ell=2}^{k}\frac{1}{\ell}\right],  \]
and so comparable to $k^{\beta}$, when $1\leq k\leq n+1$.  Thus, the sum
in \eqref{sumAks} and
\[  (n+1)^{\alpha}|a_{n+1}|^{2}=(n+1)^{\alpha}\left[\frac{1}{n+1}
        \prod_{\ell=2}^{n+1}\left(1+\frac{\beta}{\ell}\right)\right]^{2}
        \left[\sum_{j=1}^{n+1}\frac{1}{j}
        \prod_{\ell=2}^j\left(1+\frac{\beta}{\ell}\right)\right]^{-2}   \]
are comparable to
\[  \left[\sum_{j=1}^{n+1}\frac{1}{j^{\alpha}}\right]^{-2}
        \sum_{k=1}^{n}\frac{1}{k^{\alpha}}\quad\text{ and }\quad
        \frac{1}{(n+1)^{\alpha}}
        \left[\sum_{j=1}^{n+1}\frac{1}{j^{\alpha}}\right]^{-2}, \]
respectively.  Since $\sum_{j=1}^{n}j^{-\alpha}$ is comparable to
$\varphi_{\alpha}(n+1)$, the sum
\[  \sum_{k=1}^{n+1}k^{\alpha}|a_{k}|^{2}   \]
is comparable to $\varphi_{\alpha}^{-1}(n+1)$ when $n\geq 2$.  This proves the lemma.
\end{proof}

Let us now examine the rate of decay of optimal norms for polynomials whose zeros are simple
and lie in $\CC\setminus\DD$.  To begin, let us introduce some notation.
Let $A(\TT)$ denote the {\it Wiener algebra}, that is, $A(\TT)$ consists of
functions $f$, defined on $\TT$, whose Fourier coefficients are absolutely summable,
and is equipped with the norm
\[\|f\|_{A(\TT)}=\sum_{k=-\infty}^{\infty}|a_k|.\]
The positive Wiener algebra consists of analytic functions whose Fourier coefficients
satisfy $\sum_{k=0}^{\infty}|a_k|<\infty$; in particular, these functions belong to
$H^{\infty}$, the space of bounded analytic functions in $\DD,$ and
$\|f\|_{H^{\infty}}\leq \|f\|_{A(\TT)}$ holds for all $f$ in the positive Wiener algebra, where
$\|f\|_{H^{\infty}}=\sup\{|f(z)|\colon z\in\DD\}$.

\begin{prop}\label{prop43}
Let $\alpha\leq 1$, $t \in\NN$ and $f\in\Pol_{t}$.  If the zeros of $f$ are simple
and lie in $\CC\backslash\DD$, then for each $n>t$ there is $p_{n}\in\Pol_{n}$ such that
$(p_{n}f)(0)=1$,
\begin{equation}\label{distPhiEstimate}
            \|p_{n} f -1 \|^2_\alpha \leq C \varphi_{\alpha}^{-1}(n+1)
\end{equation}
holds for some constant $C$ that depends on $f$ and $\alpha$ but not on $n$, and such that
the sequence $\{p_{n} f\}_{n>t}$ is bounded in $A(\TT)-$norm.
\end{prop}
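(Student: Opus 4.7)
The plan is to approximate $1/f$ via partial fractions, handling each simple pole separately. Let $\zeta_{1},\ldots,\zeta_{t}$ denote the zeros of $f$ (all in $\CC\setminus\DD$), and write
\[
    \frac{1}{f(z)}=\sum_{j=1}^{t}\frac{A_{j}}{\zeta_{j}-z}
\]
for constants $A_{j}\in\CC$. For each $j$ with $|\zeta_{j}|=1$, take $q_{n,j}(z)=\zeta_{j}^{-1}\sum_{k=0}^{n}c_{k}(z/\zeta_{j})^{k}$, where the $c_{k}$ are the coefficients from \eqref{CkFormulaWithBetas}; this is exactly the rotation of the construction in the proof of Lemma \ref{sharp1} to the zero $\zeta_{j}$, normalized so that $q_{n,j}(0)=1/\zeta_{j}$. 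For each $j$ with $|\zeta_{j}|>1$, take $q_{n,j}$ to be the degree-$n$ Taylor polynomial of $1/(\zeta_{j}-z)$, so that $(\zeta_{j}-z)q_{n,j}(z)-1=-(z/\zeta_{j})^{n+1}$ decays exponentially in both $D_{\alpha}$ and $A(\TT)$.

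Set $p_{n}=\sum_{j=1}^{t}A_{j}q_{n,j}\in\Pol_{n}$. Using the partial fraction identity $\sum_{j}A_{j}F_{j}=1$ with $F_{j}(z)=f(z)/(\zeta_{j}-z)$, one obtains the key algebraic identity
\[
    p_{n}(z)f(z)-1=\sum_{j=1}^{t}A_{j}\,F_{j}(z)\,\bigl[(\zeta_{j}-z)q_{n,j}(z)-1\bigr].
\]
Since the zeros of $f$ are simple, each $F_{j}$ is a polynomial of degree $t-1$, hence a bounded multiplier of $D_{\alpha}$ (for $\alpha\leq 1$) and an element of the Banach algebra $A(\TT)$, with norms independent of $n$. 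Therefore the norm of $p_{n}f-1$ in either space is controlled by a constant times the maximum of the per-factor errors $\|(\zeta_{j}-z)q_{n,j}-1\|$. For boundary zeros, the proof of Lemma \ref{sharp1} (with the normalization $q_{n,j}(0)=1/\zeta_{j}$ in place of the optimal one) gives a $D_{\alpha}^{2}$-bound of order $\varphi_{\alpha}^{-1}(n+1)$; the interior-zero terms decay exponentially. Summing yields \eqref{distPhiEstimate}.

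For the uniform $A(\TT)$ bound, the interior-zero contributions are again exponentially small, so it suffices to bound the boundary-zero factors in $A(\TT)$. By the explicit formulas in the proof of Lemma \ref{sharp1}, after rotation the nonzero Fourier coefficients of $(\zeta_{j}-z)q_{n,j}(z)-1$ are $\zeta_{j}^{-k}a_{k}$ for $1\leq k\leq n+1$, where $a_{k}=c_{k}-c_{k-1}$ for $k\leq n$ and $a_{n+1}=-c_{n}$. The sequence $\{c_{k}\}_{k=0}^{n}$ defined by \eqref{CkFormulaWithBetas} is positive and strictly decreasing from $c_{0}=1$, so the sum telescopes:
\[
    \sum_{k=1}^{n+1}|a_{k}|=\sum_{k=1}^{n}(c_{k-1}-c_{k})+c_{n}=c_{0}=1.
\]
Thus each $\|(\zeta_{j}-z)q_{n,j}-1\|_{A(\TT)}\leq 1$, and combined with the algebraic identity this yields a uniform $A(\TT)$ bound on $p_{n}f-1$, hence on $p_{n}f$.

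Finally, the normalization $(p_{n}f)(0)=1$ is automatic: since $q_{n,j}(0)=1/\zeta_{j}$,
\[
    p_{n}(0)f(0)=f(0)\sum_{j=1}^{t}\frac{A_{j}}{\zeta_{j}}=f(0)\cdot\frac{1}{f(0)}=1,
\]
by the partial fraction expansion at $z=0$. I expect the main obstacle to be the $A(\TT)$ control: unlike the $D_{\alpha}$ rate, which is compatible with using any family achieving the optimal order, the $A(\TT)$ bound forces us to use the specific coefficients from \eqref{CkFormulaWithBetas} and to exploit the monotonicity that makes $\sum|a_{k}|$ telescope to a constant.
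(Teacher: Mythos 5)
Your proof is correct, but it takes a genuinely different route from the paper's. The paper works with a single global construction: it expands $1/f=\sum_k b_kz^k$ (using partial fractions only to see that the $b_k$ are bounded), sets $c_k=\bigl(1-\varphi_{\alpha}(k)/\varphi_{\alpha}(n+1)\bigr)b_k$, and then the entire burden falls on the Control Lemma \ref{fundamental}, which combines the convolution identity $\sum_{j\le k}b_ja_{k-j}=0$ with the near-linearity of $\varphi_{\alpha}$ to show that the tail coefficients of $p_nf-1$ are $O\bigl((k+1)^{-\alpha}/\varphi_{\alpha}(n+1)\bigr)$; the $A(\TT)$ bound is then read off from the same coefficient estimates. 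You instead use the partial fractions to localize: each simple pole $\zeta_j$ is handled by the already-solved one-zero model (the explicit coefficients \eqref{CkFormulaWithBetas} for $|\zeta_j|=1$, plain Taylor truncation for $|\zeta_j|>1$), and the pieces are reassembled via the identity $p_nf-1=\sum_jA_jF_j\bigl[(\zeta_j-z)q_{n,j}-1\bigr]$, with the fixed cofactors $F_j=f/(\zeta_j-z)$ acting as multipliers of $D_{\alpha}$ and as elements of the Banach algebra $A(\TT)$. Both of your key estimates check out: the $D_{\alpha}$ rate follows from the computation in the proof of Lemma \ref{sharp1} with the normalization $c_0=1$, and the telescoping $\sum_{k=1}^{n}(c_{k-1}-c_k)+c_n=1$ is valid because $\beta=1-\alpha\ge0$ makes the $c_k$ positive and decreasing. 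Your argument is more elementary --- it dispenses with the Control Lemma entirely --- and yields an essentially exact $A(\TT)$ bound. What the paper's approach buys is robustness: since it relies only on boundedness and cancellation of the coefficient sequences of $f$ and $1/f$, not on the rational structure of $1/f$, it is the version that extends to the ``strongly invertible'' functions discussed in the Remark following Theorem \ref{OptimalRate}, and it produces precisely the Riesz-type polynomials that reappear in Sections 2 and 5.
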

\begin{proof}
            Suppose $f$ has simple zeros $z_1,\ldots, z_{t}\in\CC\setminus\DD$.  Then there
            are constants $d_1,\ldots,d_{t}$ such that
            \[  \frac{1}{f(z)}=\sum_{j=1}^{t}\frac{d_{j}}{z_{j}-z}
                    =\sum_{k=0}^{\infty}\left(\sum_{j=1}^{t}\frac{d_{j}}{z_{j}^{k+1}}\right)z^{k}.  \]
            Define $b_{k}=\sum_{j=1}^{t}d_{j} z_{j}^{-(k+1)}$ for $k\geq 0$.  It follows that
            the sequence $\{b_{k}\}_{k=0}^{\infty}$ is bounded in modulus by $\sum_{j=1}^{t}|d_{j}|$,
            and the Taylor series representations of $f$ and $1/f$ centered at the origin
            are of the form
            \[  f(z)=\sum_{k=0}^{t}a_kz^k\;\text{ and }\;
                    \frac{1}{f(z)}=\sum_{k=0}^{\infty}b_k z^k,  \]
            for some $a_0,\ldots,a_{t}\in\CC$.  Set $a_{k}=0$ for $k>t$.  Consequently,
            \begin{equation}\label{cancellationLemma}
                        \sum_{j=0}^k b_j a_{k-j}=0\quad\text{ for }k\in\NN \backslash\{0\}.
            \end{equation}

            Consider the polynomial $p_{n}(z) = \sum_{k=0}^n c_k z^k$ with coefficients
            \begin{equation*}
                        c_{0}=a_{0}^{-1}\quad\text{ and }\quad
                        c_k = \left(1-\frac{\varphi_{\alpha}(k)}{\varphi_{\alpha}(n+1)}\right) b_k
                        \text{ for }1\leq k\leq n.
            \end{equation*}
            For convenience of notation, let $c_k = 0$ if $k >n$.  Evidently,
            $(p_{n}f)(0)=1$. Let us prove \eqref{distPhiEstimate}.  To estimate
            $\|p_{n}f-1\|_{\alpha}^{2}$, we consider separately the norms of
            \begin{align*}
                        {\bf mp}&=\sum_{k=t+1}^{n+t}\left(\sum_{i=0}^k c_i a_{k-i}\right)z^k,\;\text{ and }\\
                        {\bf sp}&=\sum_{k=1}^t\left(\sum_{i=0}^k c_i a_{k-i}\right)z^k,
            \end{align*}
            and note that
            \begin{equation}\label{sumOfNorms}
                        \|p_{n}f-1\|_{\alpha}^{2}=\|{\bf mp}\|_{\alpha}^{2}+\|{\bf sp}\|_{\alpha}^{2}
            \end{equation}
            and
            \begin{equation}\label{coeffCk}
                        \sum_{i=0}^k c_i a_{k-i}=
                        \frac{-1}{\varphi_{\alpha}(n+1)}\sum_{i=0}^k \varphi_{\alpha}(i)b_{i}a_{k-i}
            \end{equation}
            by \eqref{cancellationLemma}.  To estimate the norm of {\bf mp}, we need the
            following result.

            \begin{lem}[Control Lemma]\label{fundamental}
            Under the assumptions of Proposition \ref{prop43}, if $k>t$, there is a constant
            $C=C(\alpha,f)$ such that
            \[  \left|\sum_{i=0}^k \varphi_{\alpha}(i) b_i a_{k-i}\right|
                    \leq \frac{C}{(k+1)^{\alpha}}.  \]
            \end{lem}

            We finish the proof of Proposition \ref{prop43} before proving the Control
            Lemma.

            By \eqref{coeffCk} and the Control Lemma \ref{fundamental},
            \begin{align*}
                        \|{\bf mp}\|^2_\alpha
                        &=\sum_{k=t+1}^{n+t} \left|\sum_{i=0}^k c_ia_{k-i}\right|^2 (k+1)^\alpha\\
                        &=\frac{1}{\varphi_{\alpha}^{2}(n+1)}\sum_{k=t+1}^{n+t} \left|\sum_{i=0}^k
                        \varphi_{\alpha}(i)b_{i}a_{k-i}\right|^2 (k+1)^\alpha\\
                        &\leq\frac{C_{1}}{\varphi_{\alpha}^{2}(n+1)}\sum_{k=t+1}^{n+t}
                        \frac{1}{(k+1)^{\alpha}}
            \end{align*}
            for some constant $C_{1}=C_{1}(\alpha,f)$.  It follows now from the estimates
            \[  \sum_{k=t+1}^{n+t} \frac{1}{(k+1)^{\alpha}}\leq\left\{
                    \begin{array}{ll}
                                n(n+t+1)^{-\alpha}\;&\text{ if }\alpha\leq0,\\
                                (1-\alpha)^{-1}[(n+t+1)^{1-\alpha}-(t+1)^{1-\alpha}]&\text{ if }0<\alpha<1,\\
                                \log(n+t+1)-\log(t+1)&\text{ if }\alpha=1
                    \end{array}\right.  \]
            and the elementary inequalities
            \begin{equation*}
                        \begin{array}{ll}
                                    (n+t+1)^{-\alpha}\leq2^{-\alpha}(n+1)^{-\alpha}\;&\text{ for }\alpha\leq0,\\
                                    (n+t+1)^{1-\alpha}\leq2^{1-\alpha}(n+1)^{1-\alpha}&\text{ for }\alpha>0,
                                    \,\text{ and }\\
                                    \log(n+t+1)-\log(t+1)\leq\log(n+1),
                        \end{array}
            \end{equation*}
            that there is a constant $C_{2}=C_{2}(\alpha,f)$ such that
            \begin{equation}\label{sumEstimate}
                        \sum_{k=t+1}^{n+t}\frac{1}{(k+1)^{\alpha}}
                        \leq C_{2}\varphi_{\alpha}(n+1),
            \end{equation}
            and so
            \begin{equation}\label{mpEstimate}
                        \|{\bf mp}\|^2_\alpha\leq\frac{C_{1}C_{2}}{\varphi_{\alpha}(n+1)}.
            \end{equation}

            Next, we estimate the norm of {\bf sp}.  Recalling \eqref{coeffCk}, we see that
            \[  \|{\bf sp}\|^2_\alpha
                        =\frac{1}{\varphi_{\alpha}^{2}(n+1)}\sum_{k=1}^t \left|\sum_{i=0}^k
                        \varphi_{\alpha}(i)b_{i}a_{k-i}\right|^2 (k+1)^\alpha.  \]
            By the Triangle inequality and since $\varphi$ is increasing, if $1\leq k\leq t$, then
            \begin{equation}\label{triangleIneqEstimate}
                        \left|\sum_{i=0}^k \varphi_{\alpha}(i)b_{i}a_{k-i}\right|
                        \leq\|b\|_{\ell^{\infty}}\|a\|_{\ell^{\infty}}(t+1)\varphi_{\alpha}(t),
            \end{equation}
            where  $a=\{a_{k}\}_{k=0}^{\infty}$ and $b=\{b_{k}\}_{k=0}^{\infty}$.  Thus,
            \begin{align*}
                        \|{\bf sp}\|^2_\alpha
                        &\leq\frac{1}{\varphi_{\alpha}^{2}(n+1)}\|b\|_{\ell^{\infty}}^{2}
                        \|a\|_{\ell^{\infty}}^{2}(t+1)^{2}\varphi_{\alpha}^{2}(t)
                        \sum_{k=1}^t (k+1)^{\alpha},
            \end{align*}
            and so
            \begin{equation}\label{spEstimate}
                        \|{\bf sp}\|^2_\alpha\leq\frac{C_{3}}{\varphi_{\alpha}(n+1)}
            \end{equation}
            as $\varphi_{\alpha}(t)\leq\varphi_{\alpha}(n+1)$.
            Hence, \eqref{distPhiEstimate} follows from \eqref{sumOfNorms},
            \eqref{mpEstimate} and \eqref{spEstimate}.

            Finally, we show that the sequence $\{p_{n} f\}_{n>t}$ is bounded in $A(\TT)$.
            Notice that, for $1\leq k\leq t$, \eqref{coeffCk} and \eqref{triangleIneqEstimate}
            imply
            \begin{equation}\label{absSumEstimateSp}
                        \left|\sum_{i=0}^{k}c_{i}a_{k-i}\right|
                        \leq\|b\|_{\ell^{\infty}}\|a\|_{\ell^{\infty}}(t+1)
            \end{equation}
            because $\varphi_{\alpha}(t)\leq\varphi_{\alpha}(n+1)$.  On the other hand,
            for $t<k\leq n+t$, \eqref{coeffCk} and the Control Lemma \ref{fundamental} imply
            \begin{equation}\label{absSumEstimateMp}
                        \left|\sum_{i=0}^{k}c_{i}a_{k-i}\right|
                        \leq\frac{C}{(k+1)^{\alpha}}\varphi_{\alpha}^{-1}(n+1)
            \end{equation}
            for some constant $C=C(\alpha,f)$.  Therefore, by \eqref{absSumEstimateSp},
            \eqref{absSumEstimateMp}, and \eqref{sumEstimate},
            \begin{align*}
                        \|p_{n}f\|_{A(\TT)}&=\sum_{k=1}^{n+t}\left|\sum_{i=0}^{k}c_{i}a_{k-i}\right|\\
                        &\leq\sum_{k=1}^{t}\|b\|_{\ell^{\infty}}\|a\|_{\ell^{\infty}}(t+1)
                        +\frac{C}{\varphi_{\alpha}(n+1)}\sum_{k=t+1}^{n+t}\frac{1}{(k+1)^{\alpha}}\\
                        &\leq\|b\|_{\ell^{\infty}}\|a\|_{\ell^{\infty}}(t+1)t
                        +C\cdot C_{2}
            \end{align*}
            and so $\{p_{n} f\}_{n>t}$ is bounded in $A(\TT)$.  This completes the proof.
\end{proof}

We now proceed to prove Lemma \ref{fundamental}.

\begin{proof}[Proof of Control Lemma \ref{fundamental}]
For $k-t\leq s\leq k$,
\begin{equation}\label{phiPrimeEstimate}
            \varphi^{\prime}_{\alpha}(s)\leq\left\{
            \begin{array}{ll}
                        (1-\alpha)k^{-\alpha}\;&\text{ if }\alpha<0\\
                        (1-\alpha)(k-t)^{-\alpha}&\text{ if }0\leq\alpha<1\\
                        (k-t)^{-1}&\text{ if }\alpha=1.
            \end{array}\right.
\end{equation}
Thus, the Mean Value Theorem, \eqref{phiPrimeEstimate}, and the inequality
\[  (k-t)^{-\alpha}\leq(t+2)^{\alpha}(k+1)^{-\alpha}
        \;\text{ for }\alpha\geq0\text{ and }k\geq t+1, \]
imply that there is a constant $C=C(\alpha,t)$ such that
\[  \varphi_{\alpha}(k)-\varphi_{\alpha}(i)\leq C(k-i)(k+1)^{-\alpha}\;\text{ for }k\geq i. \]
Recalling \eqref{cancellationLemma} and that $a_{i}=0$ for $i>t$, we obtain
\begin{align*}
            \left|\sum_{i=0}^k \varphi_{\alpha}(i) b_i a_{k-i}\right|
            &\leq\left|\sum_{i=0}^k [\varphi_{\alpha}(k)-\varphi_{\alpha}(i)] b_i a_{k-i}\right|\\
            &\leq \sum_{i=k-t}^k [\varphi_{\alpha}(k)-\varphi_{\alpha}(i)]\cdot |b_i a_{k-i}|\\
            &\leq\|a\|_{\ell^{\infty}}\|b\|_{\ell^{\infty}}C(k+1)^{-\alpha}\sum_{i=k-t}^k(k-i),
\end{align*}
where $a=\{a_{i}\}_{i=0}^{\infty}$ and $b=\{b_{i}\}_{i=0}^{\infty}$.  Hence, the
conclusion holds with constant $\|a\|_{\ell^{\infty}}\|b\|_{\ell^{\infty}}Ct(t+1)/2$.
\end{proof}

It seems natural to ask whether the proof  of Theorem \ref{prop43} can
be extended to polynomials $f$ whose zeros are not necessarily simple.   However,
even in the simple case of $f(z)=(1-z)^2$, the coefficients of the Taylor series
representation centered at the origin of $1/f$ are not bounded; consequently, the proof of
Proposition \ref{prop43} cannot be extended directly because the boundedness of these
coefficients is needed.  Nevertheless, if $f$ is an arbitrary polynomial, we can obtain an estimate for $\dist_{D_{\alpha}}(1,f\cdot\Pol_{n})$.
Moreover, using Lemma \ref{sharp1}, we will be able to show this rate of decay is sharp.

\begin{thm}\label{distForPol}
            Let $\alpha\leq 1$.  If $f$ is a polynomial whose zeros lie in
            $\CC\setminus\DD$, then there exists a constant $C=C(\alpha,f)$ such that
            \begin{equation}\label{distEstimate}
                        \dist_{D_{\alpha}}^{2}(1,f\cdot\Pol_{m})
                        \leq C\varphi_{\alpha}^{-1}(m+1)
            \end{equation}
            holds for all sufficiently large $m$.  Moreover, this estimate is sharp in the sense that if
            such a polynomial $f$ has at least one zero on $\TT$, then there exists a constant
            $\tilde{C}= \tilde{C}(\alpha,f)$ such that
      \begin{equation*}
          \tilde{C} \varphi_{\alpha}^{-1}(m+1) \leq \dist_{D_{\alpha}}^{2}(1,f\cdot\Pol_{m}).
      \end{equation*}
\end{thm}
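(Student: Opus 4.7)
The plan is to prove the upper bound by factoring $f$ into polynomials with simple zeros and applying Proposition \ref{prop43} to each factor, then combining the resulting approximants multiplicatively; the lower bound will follow from Lemma \ref{sharp1} applied to a single linear factor of $f$.

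For the upper bound, write $f = c \prod_{j=1}^{s}(z - z_{j})^{m_{j}}$ with $z_{j} \in \CC \setminus \DD$, let $r = \max_{j} m_{j}$, and for each $i \in \{1, \ldots, r\}$ set $f_{i} = \prod_{j : m_{j} \geq i}(z - z_{j})$. Then each $f_{i}$ is a polynomial with simple zeros in $\CC \setminus \DD$ and $f = c \cdot f_{1} f_{2} \cdots f_{r}$. Applying Proposition \ref{prop43} to each $f_{i}$ yields polynomials $q_{n}^{(i)} \in \Pol_{n}$ satisfying $(q_{n}^{(i)} f_{i})(0) = 1$, $\|q_{n}^{(i)} f_{i} - 1\|_{\alpha}^{2} \leq C_{i}\, \varphi_{\alpha}^{-1}(n+1)$, and a uniform $A(\TT)$ bound that (since $q_{n}^{(i)} f_{i}$ is a polynomial) implies $\|q_{n}^{(i)} f_{i}\|_{\infty} \leq M$ for some $M = M(f)$. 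I set $P_{n} = c^{-1} q_{n}^{(1)} \cdots q_{n}^{(r)} \in \Pol_{rn}$, so that $P_{n} f = \prod_{i=1}^{r}(q_{n}^{(i)} f_{i})$.

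The key auxiliary result is a multiplicative lemma: if analytic functions $g_{1}, g_{2}$ satisfy $\|g_{i} - 1\|_{\alpha}^{2} \leq \epsilon_{i}$ and $\|g_{i}\|_{\infty} \leq M'$, then $\|g_{1} g_{2} - 1\|_{\alpha}^{2} \leq K(M')(\epsilon_{1} + \epsilon_{2})$. I would prove this directly from the integral expression for $\|\cdot\|_{\alpha}$: the identity $(g_{1} g_{2})' = g_{1}' g_{2} + g_{1} g_{2}'$ together with the uniform bounds yields $\int_{\DD}|(g_{1} g_{2})'|^{2}\, d\mu_{\alpha} \leq 2(M')^{2}(D_{\alpha}(g_{1}-1) + D_{\alpha}(g_{2}-1)) \leq 2(M')^{2}(\epsilon_{1}+\epsilon_{2})$, while $|g_{1}(0) g_{2}(0) - 1|^{2} \leq 2((M')^{2}\epsilon_{2} + \epsilon_{1})$. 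Iterating this lemma $r-1$ times along the factorization of $P_{n} f$ (the partial products are uniformly bounded by $M^{r}$ in $H^{\infty}$) produces $\|P_{n} f - 1\|_{\alpha}^{2} \leq C(\alpha, f)\, \varphi_{\alpha}^{-1}(n+1)$. Given $m$, taking $n = \lfloor m/r \rfloor$ ensures $P_{n} \in \Pol_{m}$, and the comparability $\varphi_{\alpha}^{-1}(n+1) \asymp \varphi_{\alpha}^{-1}(m+1)$ for $n \asymp m$ then gives \eqref{distEstimate}.

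For the lower bound, suppose $f(\zeta) = 0$ with $\zeta \in \TT$, and write $f(z) = (\zeta - z) g(z)$ with $g$ a polynomial. For every $p \in \Pol_{m}$ the product $pg$ lies in $\Pol_{m + \deg g}$, so $f \cdot \Pol_{m} \subseteq (\zeta - z) \cdot \Pol_{m + \deg g}$. Taking infima and applying Lemma \ref{sharp1},
\[
\dist_{D_{\alpha}}^{2}(1, f \cdot \Pol_{m}) \geq \dist_{D_{\alpha}}^{2}(1, (\zeta - z) \cdot \Pol_{m + \deg g}) \geq \tilde{c}\, \varphi_{\alpha}^{-1}(m + \deg g + 1),
\]
and since $\deg g$ is a fixed constant, the right-hand side is comparable to $\varphi_{\alpha}^{-1}(m+1)$ for large $m$. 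The main obstacle is the multiplicative step in the upper bound: one must ensure that the $H^{\infty}$ bounds inherited from Proposition \ref{prop43} are strong enough to control all cross terms in the Dirichlet-type norm, and that iterating the multiplicative lemma $r-1$ times keeps the final constant finite and dependent only on $\alpha$ and $f$ — which works because both $r$ and $M$ are determined by $f$.
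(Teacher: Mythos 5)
Your proof is correct, but the upper bound is organized differently from the paper's. You factor $f=c\,f_1\cdots f_r$ into $r=\max_j m_j$ polynomials with \emph{distinct} simple-zero sets, run Proposition \ref{prop43} once per factor, and glue the approximants with a two-factor lemma ($\|g_1g_2-1\|_\alpha^2\lesssim_{M'}\|g_1-1\|_\alpha^2+\|g_2-1\|_\alpha^2$ under $H^\infty$ bounds) iterated $r-1$ times; your verification of that lemma via $(g_1g_2)'=g_1'g_2+g_1g_2'$ and the $H^\infty$ control coming from the $A(\TT)$ bound in Proposition \ref{prop43} is sound, and the degree/comparability bookkeeping ($P_n\in\Pol_{rn}$, $\varphi_\alpha^{-1}(\lfloor m/r\rfloor+1)\asymp\varphi_\alpha^{-1}(m+1)$) is fine up to the harmless equivalence constants between the coefficient norm and the integral norm. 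The paper instead multiplies $f$ by an auxiliary polynomial $h$ so that $fh=g^{\gamma}$ with $g=\prod_k(z-z_k)$ having simple zeros, invokes Proposition \ref{prop43} only once for $g$, and estimates $\|q_n^{\gamma}g^{\gamma}-1\|_\alpha$ in a single stroke via $(q_n^{\gamma}g^{\gamma})'=\gamma(q_ng)^{\gamma-1}(q_ng)'$, at the cost of the shift $\dist_{D_\alpha}(1,f\cdot\Pol_{n+d})\leq\dist_{D_\alpha}(1,fh\cdot\Pol_n)$. Both routes hinge on exactly the same ingredient --- the uniform $A(\TT)$ (hence $H^\infty$) bound on $q_ng$ from Proposition \ref{prop43}, without which neither product estimate closes --- so the difference is one of packaging: your multiplicative lemma is more modular and reusable for general bounded functions near $1$, while the paper's $g^{\gamma}$ device is slightly slicker and avoids iterating constants. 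Your lower bound (absorbing $f\cdot\Pol_m$ into $(\zeta-z)\cdot\Pol_{m+\deg g}$ and quoting Lemma \ref{sharp1}) is the paper's argument verbatim.
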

\begin{proof}
Suppose $f$ has factorization
\[  f(z)=K \prod_{k=1}^s (z-z_k)^{r_k}  \]
with $r_{1},\ldots,r_{s}\in\NN$, $z_1,\ldots,z_{s}\in\CC\setminus\DD$ are distinct,
and $K \in\CC\setminus\{0\}$.  Define
\[g(z)=\prod_{k=1}^{s}(z-z_k)\quad\text{ and }\quad
    h(z) =K^{-1}\prod_{k=1}^s (z-z_k)^{\gamma- r_k},\]
where $\gamma=\max\{r_1,\ldots,r_s\}$, and let $d$ equal the degree of $h$.
Then $fh=g^{\gamma}$,
\begin{equation}\label{distEstimateForProduct}
            \dist_{D_{\alpha}}(1,f\cdot\Pol_{n+d})
            \leq\dist_{D_{\alpha}}(1,fh\cdot\Pol_{n})\;\text{ for }n\in\NN,
\end{equation}
and the zeros of $g$ are simple and lie in $\CC\backslash\DD$.

By Proposition \ref{prop43}, for $n>s$, we can choose $q_{n}\in\Pol_{n}$ such
that $(q_{n}g)(0)=1$ and
\begin{equation}\label{qnEstimate}
            \|q_{n}g-1\|^2_\alpha\leq C_{1}\varphi_{\alpha}^{-1}(n+1)
\end{equation}
holds for some $C_{1}=C_{1}(\alpha,g)$,  and such that the sequence
$\{q_{n} g\}_{n>s}$ is bounded in $A(\TT)$.

Let $d\mu_{\alpha}(z)=(1-|z|^{2})^{1-\alpha}\, dA(z)$.  Recalling that
$\|p\|_{\alpha}^2$ is comparable to
$|p(0)|^{2}+D_{\alpha}(p)=|p(0)|^{2}+\|p^{\prime}\|_{L^{2}(\mu_{\alpha})}^{2}$
for all $p\in D_{\alpha}$, we obtain
\begin{align}
            \|q_{n}^{\gamma}g^{\gamma} -1\|^2_\alpha
            &\leq C_{2}\|(q_{n}^{\gamma}g^{\gamma})^{\prime}\|^2_{L^{2}(\mu_{\alpha})}\nonumber\\
            &= C_{2}\|(q_n g)^{\gamma-1}\gamma (q_n^{\prime}g
            +q_n g^{\prime})\|^2_{L^{2}(\mu_{\alpha})}\nonumber\\
            &\leq C_{2}\gamma^{2}\|q_n g\|_{H^{\infty}}^{2\gamma-2}
            \|q_n^{\prime}g+q_n g^{\prime}\|^2_{L^{2}(\mu_{\alpha})}\nonumber\\
            &\leq C_{3}\gamma^{2}\|q_n g\|_{H^{\infty}}^{2\gamma-2}
            \|q_{n}g-1\|^{2}_{\alpha}\nonumber\\
            &\leq C_{3}\gamma^{2}\|q_n g\|_{A(\TT)}^{2\gamma-2}
            \|q_{n}g-1\|^{2}_{\alpha}\label{gGammaEstimate}
\end{align}
for some constants $C_{2}=C_{2}(\alpha)$ and $C_{3}=C_{3}(\alpha)$, as
$(q_{n}g)(0)=1$.  Therefore, \eqref{qnEstimate} and \eqref{gGammaEstimate}
imply that there is a constant $C_{4}=C_{4}(\alpha,\gamma, g)$ such that
\[  \dist_{D_{\alpha}}^{2}(1,g^{\gamma}\cdot\Pol_{n\gamma})
        \leq C_{4}\varphi_{\alpha}^{-1}(n+1)    \]
because $q_{n}^{\gamma}\in\Pol_{n\gamma}$ and $\{q_{n} g\}_{n>s}$ is bounded
in $A(\TT)$.  Thus, by \eqref{distEstimateForProduct},
\begin{equation}\label{distLinear}
            \dist_{D_{\alpha}}^{2}(1,f\cdot\Pol_{n\gamma+d})
            \leq C_{4}\varphi_{\alpha}^{-1}(n+1)\;\text{ when }n>s.
\end{equation}

Let $m>d+(s+1)\gamma$.  Then there exists an integer $a$ and an $n\in\NN$
such that $0\leq a<\gamma$ and $m-d=n\gamma+a$.  In particular, $n>s$ and
\begin{equation}\label{distC4m}
            \dist_{D_{\alpha}}^{2}(1,f\cdot\Pol_{m})
            \leq C_{4}\varphi_{\alpha}^{-1}(n+1)
\end{equation}
follows from \eqref{distLinear} as $m\geq n\gamma+d$.  Finally, the elementary
inequalities
\[  (1+n\gamma+d+a)\leq(\gamma+d)(1+n)\;\text{ and }\;
        (1+n\gamma+d+a)\leq(1+n)^{2\gamma+d}    \]
valid for all $n\in\NN$ imply the existence of a constant $C_{5}=C_{5}(\alpha,\gamma,d)$
such that
\begin{equation}\label{phiC5m}
            \varphi_{\alpha}(m+1)\leq C_{5}\varphi_{\alpha}(n+1).
\end{equation}
Hence, \eqref{distEstimate} holds for $m>d+(s+1)\gamma$ by \eqref{distC4m}
and \eqref{phiC5m}.

Let us now show that the inequality is sharp.  If $f$ is any polynomial with zeros
outside $\DD$ that has at least one zero on $\TT,$ then $f(z) =  h(z)(\zeta - z)$
for some polynomial $h$ of degree say $d.$ Then for any polynomial $p_m$ of degree at most $m,$
$$ \| p_m(z) h(z) (\zeta - z) - 1 \|_{\alpha}^2 \geq \dist_{D_{\alpha}}^{2}(1,(\zeta - z) \cdot\Pol_{m+d}).$$ By Lemma \ref{sharp1}, there exists a constant
$C_1 = C_1(\alpha)$ such that
$$ \dist_{D_{\alpha}}^{2}(1,(\zeta - z) \cdot\Pol_{m+d}) \geq C_1 \varphi^{-1}_{\alpha}(m+d+1).$$ Now, in a manner similar to \eqref{phiC5m}, we can choose
a constant $C_2 = C_2(\alpha, d)$ such that
$$\varphi_{\alpha}^{-1}(m+d+1) \geq  C_2 \varphi_{\alpha}^{-1}(m+1).$$ Finally, letting $\tilde{C} = C_1 C_2$ and noting that the polynomial $p_m$ was arbitrary, we obtain
the desired result that
\begin{equation*}
            \dist_{D_{\alpha}}^{2}(1,f\cdot\Pol_{m}) \geq \tilde{C} \varphi_{\alpha}^{-1}(m+1).\qedhere
\end{equation*}
\end{proof}

In fact, the rates in Theorem \ref{distForPol} hold for more general functions $f$,
namely functions that have an analytic continuation to the closed unit disk.  Since
such functions can be factored as $f(z) = h(z) g(z),$ where $h$ is a polynomial with
a finite number of zeros on the circle and $g$ is a function analytic in the closed
disk with no zeros there, the estimates in Theorem \ref{distForPol} hold for $h$.
Moreover, we can obtain estimates on $g$ that will allow us to give upper bounds on
the product $h(z) g(z)$.  The estimates needed for $g$ are contained in the following lemma.

\begin{lem}\label{estimates_analytic_continuable}
Let $\alpha\leq 1$ and let $g$ be analytic in the closed disk. If $T_n(g)$
 is the Taylor polynomial of degree $n$ of $g$, then
\[\| g-T_n(g)\|^2_\alpha = o\left( \varphi_{\alpha}^{-1}(n+1)\right), \]
and there exists a constant $C= C(\alpha)$ such that
\[\|T_n(g)\|_{M(D_{\alpha})} \leq C. \]
\end{lem}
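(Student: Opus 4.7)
The plan is to exploit that $g$ extends holomorphically to a neighborhood of $\bar{\DD}$, so its Taylor coefficients decay exponentially: there exist $C_g>0$ and $r\in(0,1)$ such that if $g(z)=\sum_{k\geq 0}a_kz^k$, then $|a_k|\leq C_g r^k$ for all $k\geq 0$.

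For the first assertion, this decay gives
\[
\|g-T_n(g)\|_\alpha^2 = \sum_{k\geq n+1}(k+1)^\alpha |a_k|^2 \leq C_g^2\sum_{k\geq n+1}(k+1)^\alpha r^{2k},
\]
which decays exponentially in $n$. Since for $\alpha\leq 1$ the quantity $\varphi_\alpha^{-1}(n+1)$ decays only polynomially (like $(n+1)^{\alpha-1}$ when $\alpha<1$) or logarithmically (like $1/\log(n+1)$ when $\alpha=1$), the ratio tends to $0$, yielding the $o(\varphi_\alpha^{-1}(n+1))$ bound.

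For the multiplier estimate, the same exponential decay immediately forces
\[
\|T_n(g)\|_\infty\leq\sum_k|a_k|<\infty\quad\text{and}\quad \|(T_n(g))'\|_\infty\leq\sum_k k|a_k|<\infty,
\]
with bounds depending only on $g$. For $\alpha\leq 0$ this suffices because $M(D_\alpha)=H^\infty$. For $0<\alpha\leq 1$, I would work with the equivalent integral norm \eqref{dalfaf} and apply the product rule: for any polynomial $p$ and $h\in D_\alpha$,
\[
\|ph\|_\alpha^2 \leq C\Bigl(|p(0)h(0)|^2 + \|p\|_\infty^2\,\|h'\|_{L^2(\mu_\alpha)}^2 + \|p'\|_\infty^2\int_\DD |h|^2\,d\mu_\alpha\Bigr).
\]
A short Beta-function computation (analogous to the one in Section~\ref{optimalPnEx}) gives $\int_\DD |z|^{2k}\,d\mu_\alpha \asymp (k+1)^{\alpha-2}$, and the orthogonality of monomials produces
\[
\int_\DD |h|^2\,d\mu_\alpha=\sum_k|a_k(h)|^2\int_\DD|z|^{2k}\,d\mu_\alpha \leq C'\|h\|_\alpha^2,
\]
where I write $h(z)=\sum_k a_k(h)z^k$. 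Applying this with $p=T_n(g)$ and using the uniform bounds above gives $\|T_n(g)\|_{M(D_\alpha)}\leq C(\alpha,g)$ as required.

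The main technical point is the control of $\int_\DD |h|^2\,d\mu_\alpha$ by $\|h\|_\alpha^2$ in the intermediate range $0<\alpha\leq 1$, which rests on the elementary comparison $(k+1)^{\alpha-2}\leq(k+1)^\alpha$. Everything else is bookkeeping with geometric series, exploiting the exponential decay of $\{a_k\}$.
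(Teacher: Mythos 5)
Your argument is correct, and the first half (the tail estimate) is exactly the paper's: exponential decay of the Taylor coefficients of $g$ makes $\|g-T_n(g)\|_\alpha^2$ decay exponentially, which beats the polynomial or logarithmic decay of $\varphi_\alpha^{-1}(n+1)$. For the multiplier bound you take a genuinely different route. The paper stays entirely on the coefficient side: it bounds each monomial multiplier via $\|d_kz^k\cdot f\|_\alpha\leq C_1R^{-k}(k+1)^{\alpha/2}\|f\|_\alpha$ for $\alpha\geq 0$ (using the index shift $(j+k+1)^\alpha\leq(k+1)^\alpha(j+1)^\alpha$), and $\leq C_1R^{-k}\|f\|_\alpha$ for $\alpha<0$, then sums the resulting geometric series. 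You instead pass to the integral norm \eqref{dalfaf}, apply the Leibniz rule, and control $\|p\|_{M(D_\alpha)}$ by $|p(0)|+\|p\|_{H^\infty}+\|p'\|_{H^\infty}$; the key embedding $\int_\DD|h|^2\,d\mu_\alpha\leq C\|h\|_\alpha^2$ is correctly justified by your moment computation and the elementary inequality $(k+1)^{\alpha-2}\leq(k+1)^\alpha$. Your version is slightly longer but more robust: it isolates the general fact, implicitly invoked again in the proof of Theorem \ref{cyclic1}, that for $\alpha\leq 1$ the multiplier norm of a polynomial is controlled by the sup norms of the polynomial and its derivative, whereas the paper's computation is a short manipulation tailored to monomials. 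Two minor remarks: your case split at $\alpha=0$ is unnecessary, since the product-rule argument works verbatim for all $\alpha\leq 1$ (and quoting the set equality $M(D_\alpha)=H^\infty$ does not by itself give a uniform norm bound, so it is cleaner to avoid it); and both you and the paper end up with a constant depending on $g$ as well as $\alpha$, which is all the application requires, the statement's ``$C=C(\alpha)$'' notwithstanding.
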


\begin{proof}
Suppose $g(z) = \sum_{k=0}^{\infty} d_k z^k$ is convergent in the closed unit disk.
Then there exist constants $R> 1$ and $C_1>0$ such that $|d_k| \leq C_1 R^{-k}$.
Therefore

\begin{align*}
\| g-T_n(g)\|^2_\alpha &= \sum_{k = n+1}^{\infty} (k+1)^{\alpha} |d_k|^2\\
&\leq C_1 R^{-2n} \sum_{j = 1}^{\infty} (j+n+1)^{\alpha} R^{-2j}\\
&\leq C_1 R^{-2n} (n+1)^{\alpha} C_2,
\end{align*}
where $C_2=C_2(\alpha, R) = \sum_{j = 1}^{\infty} (j+1)^{\alpha} R^{-2j} $ if
$\alpha \geq 0$ and $C_2 = \sum_{j = 1}^{\infty} R^{-2j} $ if $\alpha < 0$.  In either
case, $C_2 < \infty$ and is independent of $n$.  Therefore, we have that for all $\alpha\leq 1,$
$$ \| g-T_n(g)\|^2_\alpha \leq C_1 C_2 R^{-2n} (n+1)^{\alpha}.$$ Noting that $R^{-2n}$ decays exponentially as $n \rightarrow \infty$ while $\varphi^{-1}_{\alpha}$ decays at a polynomial or logarithmic rate, we obtain that
\[\| g-T_n(g)\|^2_\alpha = o\left( \varphi_{\alpha}^{-1}(n+1)\right). \]

The same type of argument can be used to show that the Taylor polynomials $T_n(g)$
have uniformly bounded multiplier norms. Indeed, if
$f(z) = \sum_{k=0}^{\infty} a_k z^k \in D_{\alpha},$ then in a manner similar to
that above, using the exponential decay of the coefficients $d_k$ of $g$, one can
easily show that for every integer $k,$
$$ \| d_k z^k \cdot f \|_{\alpha} \leq R^{-k}C_{1}C_3 \| f\|_{\alpha},$$
where $C_3 = C_3(k,\alpha) = (k+1)^{\alpha/2}$ if $\alpha \geq 0,$ otherwise, $C_3 = 1$.
Therefore,
$$ \| T_n(g) \cdot f \|_{\alpha} \leq\sum_{k=0}^n \| d_k z^k \cdot f \|_{\alpha}
\leq \left(   \sum_{k = 0}^{n} C_{1}C_3  R^{-k} \right) \| f \|_{\alpha}.$$
Since the series $C =  \sum_{k = 0}^{\infty} C_1 C_3  R^{-k}$ converges,
we obtain
$$ \|T_n(g)\|_{M(D_{\alpha})} \leq C, $$ as desired.
\end{proof}

\begin{thm}\label{OptimalRate}
Let $\alpha\leq 1$.  If $f$ is a function admitting an analytic continuation to the
closed unit disk and whose zeros lie in $\CC\setminus\DD$, then there exists a constant
$C=C(\alpha,f)$ such that
\begin{equation*}
            \dist_{D_{\alpha}}^{2}(1,f\cdot\Pol_{m})
            \leq C\varphi_{\alpha}^{-1}(m+1)
\end{equation*}
holds for all sufficiently large $m$.  Moreover, this estimate is sharp in the sense that if
such a function $f$ has at least one zero on $\TT$, then there exists a constant
$\tilde{C}= \tilde{C}(\alpha,f)$ such that
\begin{equation*}
    \tilde{C} \varphi_{\alpha}^{-1}(m+1) \leq \dist_{D_{\alpha}}^{2}(1,f\cdot\Pol_{m}).
\end{equation*}
\end{thm}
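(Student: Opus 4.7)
The plan is to reduce the analytic case to the polynomial case of Theorem \ref{distForPol} by factoring out the analytic non-vanishing part of $f$ and approximating it via Taylor polynomials as controlled by Lemma \ref{estimates_analytic_continuable}. Since $f$ extends analytically to a neighborhood of $\overline{\DD}$ with all its zeros in $\CC\setminus\DD$, write $f = hg$, where $h$ is the monic polynomial carrying the boundary zeros of $f$ (with multiplicities) and $g$ is analytic and non-vanishing in a neighborhood of $\overline{\DD}$. In particular, both $g$ and $1/g$ lie in $M(D_\alpha)$ and Lemma \ref{estimates_analytic_continuable} applies to each.

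For the upper bound, I build a near-optimal polynomial multiplicatively. Split $m = m_1 + m_2$ with $m_1 \asymp m_2 \asymp m/2$. Let $q_{m_1} \in \Pol_{m_1}$ be the polynomial provided by Theorem \ref{distForPol} applied to $h$, so that $\|q_{m_1}h - 1\|_\alpha^2 \leq C\varphi_\alpha^{-1}(m_1+1)$, and let $T_{m_2}(1/g) \in \Pol_{m_2}$ denote the Taylor polynomial of $1/g$. Setting $p_m = q_{m_1}T_{m_2}(1/g) \in \Pol_m$ and using the expansion
\[
p_m f - 1 = (q_{m_1}h - 1)\bigl(T_{m_2}(1/g)\, g\bigr) + \bigl(T_{m_2}(1/g)\, g - 1\bigr),
\]
I bound the first summand by $\|q_{m_1}h - 1\|_\alpha\,\|T_{m_2}(1/g)\|_{M(D_\alpha)}\|g\|_{M(D_\alpha)}$, both multiplier norms being uniformly bounded (by Lemma \ref{estimates_analytic_continuable} and the analyticity of $g$ in $\overline{\DD}$). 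The second summand equals $(T_{m_2}(1/g) - 1/g)g$, whose $D_\alpha$-norm is $o(\varphi_\alpha^{-1/2}(m_2+1))$ (in fact exponentially small in $m_2$) by Lemma \ref{estimates_analytic_continuable}. Combining, $\|p_m f - 1\|_\alpha^2 \leq C\varphi_\alpha^{-1}(m+1)$, giving the upper bound.

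For the sharpness, pick $\zeta \in \TT$ with $f(\zeta) = 0$ and write $f = (\zeta - z)\tilde f$, with $\tilde f$ still analytic in a neighborhood of $\overline{\DD}$. Given $P \in \Pol_m$ and an auxiliary integer $N$, the splitting $\tilde f = T_N(\tilde f) + (\tilde f - T_N(\tilde f))$ places $PT_N(\tilde f) \in \Pol_{m+N}$, and the triangle inequality yields
\[
\|Pf - 1\|_\alpha \geq \|PT_N(\tilde f)(\zeta - z) - 1\|_\alpha - \|\zeta - z\|_{M(D_\alpha)}\,\bigl\|\tilde f - T_N(\tilde f)\bigr\|_{M(D_\alpha)}\|P\|_\alpha.
\]
The first term is $\geq \dist_{D_\alpha}(1, (\zeta-z)\Pol_{m+N}) \gtrsim \varphi_\alpha^{-1/2}(m+N+1)$ by Lemma \ref{sharp1}, while the quantity $\|\tilde f - T_N(\tilde f)\|_{M(D_\alpha)}$ decays exponentially in $N$, following the same calculation that produced the uniform multiplier bound in the proof of Lemma \ref{estimates_analytic_continuable}.

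The main obstacle is to control the error term uniformly in $P$: because $f$ vanishes on $\TT$, $1/f$ is not a multiplier, so $\|P\|_\alpha$ is not automatically bounded by $\|Pf\|_\alpha$. Assuming without loss of generality $\|Pf-1\|_\alpha \leq 1$, hence $\|Pf\|_\alpha \leq 2$, I would combine the reproducing-kernel estimate $|Pf(z)| \lesssim \|Pf\|_\alpha(1-|z|^2)^{(\alpha-1)/2}$ on circles $|z|=r<1$ with the bound $\min_{|z|=r}|f(z)| \gtrsim (1-r)^K$ (where $K$ is the total order of zeros of $f$ on $\TT$), and then apply Cauchy estimates to the Taylor coefficients of $P$; taking $r = 1 - 1/m$ produces $\|P\|_\alpha \lesssim m^{K'}$ for some $K'$ depending only on $f$ and $\alpha$. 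Choosing $N$ a sufficiently large multiple of $\log m$ then forces the error term to be $\ll \varphi_\alpha^{-1/2}(m+N+1) \asymp \varphi_\alpha^{-1/2}(m+1)$, yielding the sharp lower bound.
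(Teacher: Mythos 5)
Your upper bound is essentially the paper's argument: the same factorization $f=hg$, the same product ansatz $p_m = q\cdot T(1/g)$, the same triangle-inequality decomposition, and the same appeal to Lemma \ref{estimates_analytic_continuable} for the uniform multiplier bound and the exponentially small tail (your splitting $m=m_1+m_2$ just tidies the degree bookkeeping, which the paper glosses over since $\varphi_{\alpha}^{-1}(2m+1)\asymp\varphi_{\alpha}^{-1}(m+1)$). The lower bound is where you genuinely diverge. The paper observes that it suffices to test the distance against the \emph{optimal} approximants $p_m$ themselves; by the already-established upper bound these satisfy $p_mf\to 1$, hence $p_mh\to 1/g$ in $D_{\alpha}$ (as $1/g\in M(D_{\alpha})$), so $\|p_mh\|_{\alpha}$ is bounded for free. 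Writing $p_mf-1=\bigl(p_mhT_m(g)-1\bigr)+p_mh\bigl(g-T_m(g)\bigr)$, using Lemma \ref{sharp1} on the first piece (since $p_mT_m(g)\in\Pol_{2m}$) and the exponentially small multiplier norm of $g-T_m(g)$ on the second, then finishes. You instead work with an arbitrary $P\in\Pol_m$, which forces you to confront the obstacle you correctly identify: controlling $\|P\|_{\alpha}$ in terms of $\|Pf\|_{\alpha}$ when $f$ vanishes on $\TT$. Your proposed fix --- reproducing-kernel growth of $|Pf|$ near $\TT$, the lower bound $|f|\gtrsim(1-r)^{K}$ on $|z|=r$, Cauchy estimates at $r=1-1/m$ giving $\|P\|_{\alpha}\lesssim m^{K'}$, and then $N\asymp\log m$ so the exponentially small truncation error beats the polynomial loss --- does work: the kernel norm is $\asymp(1-|z|)^{(\alpha-1)/2}$ for $\alpha<1$ and $\asymp\log^{1/2}\bigl(e/(1-|z|)\bigr)$ for $\alpha=1$, both polynomially bounded, and $\varphi_{\alpha}^{-1/2}(m+N+1)\asymp\varphi_{\alpha}^{-1/2}(m+1)$ since $N=o(m)$. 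So your route is correct and even proves the lower bound uniformly over all $P\in\Pol_m$ directly, rather than only via the extremal one; the price is the polynomial-growth lemma, which you only sketch, and which the paper's restriction to the optimal approximants renders entirely unnecessary.
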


\begin{proof}
Let us first examine the upper bound. Without loss of generality, $f$ is not identically $0,$
and therefore can only have a finite number of zeros on the unit circle $\TT.$
Write $f(z) = h(z) g(z),$ where $h$ is the polynomial formed from the zeros of $f$ that
lie on $\TT,$ and $g$ is analytic in the closed disk with no zeros there.  Therefore,
$1/g$ is also analytic in the closed unit disk (and obviously has no zeros there), and
hence Lemma \ref{estimates_analytic_continuable} applies to $1/g$.  Notice also that $g$
and $g'$ are bounded in the disk, and therefore $g$ is a multiplier for $D_{\alpha}.$

Now, for $m\in\NN$, let $q_m$ be the optimal approximant of order $m$ to $1/h$
and define $p_m = q_m T_m(1/g)$.  By the Triangle Inequality,
\[\|p_m f -1\|_\alpha \leq \|T_m(1/g) g(q_m h -1) \|_\alpha + \|T_m(1/g) g -1\|_\alpha.\]
Since we know that $g$ is a multiplier for $D_{\alpha}$, that the $q_m$ are optimal for $h$,
and that $T_m(1/g)$ are uniformly bounded in multiplier norm by Lemma
\ref{estimates_analytic_continuable}, we see that the square of the first summand on
the right-hand side is dominated by a constant times $\varphi_{\alpha}(m+1)$, for some
constant independent of $m$.  On the other hand, by the second part of Lemma
\ref{estimates_analytic_continuable}, the square of the second summand is
$o(\varphi_{\alpha}(m+1)),$ and thus is negligible by comparison. Therefore,
$$ \dist_{D_{\alpha}}^{2}(1,f\cdot\Pol_{m}) \leq C\varphi_{\alpha}^{-1}(m+1)$$ for
some constant $C=C(\alpha,f),$ as desired.

Let us now address the lower bound for such functions $f.$ Notice first that if the lower
bound holds for functions of the form $(\zeta - z) g(z),$ where $g$ is analytic and without
zeros in the closed unit disk, then the conclusion holds for $f.$
Moreover, as in the proof of Lemma \ref{sharp1}, it is enough to
consider $\zeta = 1.$ Therefore, we write $f(z) = h(z) g(z),$ where
$h(z) = 1-z$ and $g$ as above.  Again, since $g$ is analytic and has
no zeros in the closed disk, note that both $g$ and $1/g$ are
multipliers for $D_{\alpha}.$ Therefore, if $p_m$ is any polynomial
of degree less than or equal to $m,$
$$ \|p_m f - 1\|_{\alpha} \leq \|g\|_{M(D_{\alpha})} \|p_m h - 1/g\|_{\alpha} \leq \|g\|_{M(D_{\alpha})} \|1/g\|_{M(D_{\alpha})} \|p_m f - 1\|_{\alpha}.$$

Now, let's choose $p_m$ to be the optimal approximants of degree less
than or equal to $m$ to $1/f$.  Then by the above discussion, we can
assume $p_m h - 1/g \rightarrow 0$ in $D_{\alpha}$, and in
particular, the norms $\|p_m h \|_{\alpha}$ are bounded. We thus
obtain
\begin{eqnarray*}
     \|p_m f - 1 \|_{\alpha}&  = &  \| p_m h (g - T_m(g)+T_m(g)) - 1\|_{\alpha} \\
     & \geq & \| p_m h T_m(g) - 1\|_{\alpha} - \| p_m h (g - T_m(g))\|_{\alpha}
\end{eqnarray*}
Now, by Lemma \ref{sharp1},
$ \| p_m h T_m(g) - 1\|_{\alpha}^2$ is greater than or equal to a constant times  $\varphi^{-1}_{\alpha}(2m+1), $
which in turn is comparable to $ \varphi^{-1}_{\alpha}(m+1).$ On the other hand,
$$ \| p_m h (g - T_m(g))\|_{\alpha}  \leq \|p_mh\|_{\alpha}\|g - T_m(g)\|_{M(D_{\alpha})},$$ so
by Lemma \ref{estimates_analytic_continuable} and since the norms of $\|p_m h \|_{\alpha}$ are bounded,
this term decays at an exponential rate.  Therefore, there exist constants $C_1$ and $C_2$ such that
$$ \dist_{D_{\alpha}}^{2}(1,f\cdot\Pol_{m}) = \|p_m f - 1 \|_{\alpha}^2 \geq C_1 \| p_m h T_m(g) - 1\|_{\alpha} \geq C_2  \varphi^{-1}_{\alpha}(m+1),$$ as desired.
\end{proof}

\begin{rem}

The methods used in the proofs of Theorems \ref{distForPol} and \ref{OptimalRate}
yield an independent proof of the upper bound for the optimal norm
in the Dirichlet space (the case $\alpha=1$), valid for a class of
functions with the property that the Fourier coefficients of $f$ and
of $1/f$ exhibit simultaneously rapid decay. More specifically, if
$\{a_j\}$ denotes the sequence of Taylor coefficients of a
function $f \in D$ and $\{b_i\}$ denotes the coefficients of
$1/f$, we say that $f$ is a \emph{strongly invertible function} if
$f$ has no zeros in $\DD$ and, if for all $j$ and $k$, we have
$|a_j| \leq C(j+1)^{-3},$ and $|b_k| \leq C(k+1)^{-1},$ for some constant $C$.
For example, one can show that if $f$ is strongly
invertible, then $1/f$ is in the Dirichlet space.
(In fact, much more is true; $1/f \in D_2$.)
That is, strongly invertible implies invertible in
the Dirichlet space, and such functions are known to be cyclic (see
\cite{BS84}, p.~274), and are therefore of interest.
By defining polynomials analogous to those at the end of Section 2, namely,
\[P_n(z)=
\sum_{k=0}^n \left(1-\frac{H_{k}}{H_{n+1}}\right)b_k z^k,\]
one can use the stronger condition on the decay of the coefficients of $1/f$ to prove
a version of the Control Lemma \ref{fundamental} with the coefficients
$H_k$ and then one can obtain the conclusion of Theorem \ref{OptimalRate} for these
strongly invertible functions.  In particular, it can be shown that the
following holds.
\begin{prop}
Let $f$ be a strongly invertible function, $\gamma \in \NN$ and
$g=f^\gamma$. Then there exist polynomials $q_n$ of degree at most $n$ for
which $\|q_n g -1 \|^2_D \leq C/\log (n+2)$.
\end{prop}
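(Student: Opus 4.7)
The plan is to handle the $\gamma=1$ case first and then pass to general $\gamma$ via a power-trick analogous to that in the proof of Theorem~\ref{distForPol}. Suppose we have produced polynomials $p_n\in\Pol_n$ satisfying both $\|p_nf-1\|_D^2\le C/\log(n+2)$ and $\sup_n\|p_nf\|_{A(\TT)}<\infty$. Setting $q_m=p_{\lfloor m/\gamma\rfloor}^\gamma\in\Pol_m$, the factorization $(p_nf)^\gamma-1=(p_nf-1)\sum_{j=0}^{\gamma-1}(p_nf)^j$, combined with the derivative expression $\|u\|_D^2\asymp|u(0)|^2+\int_\DD|u'|^2\,dA$ and the uniform $H^\infty$-boundedness of $\{p_nf\}$ (via $A(\TT)\subseteq H^\infty$), yields $\|q_mg-1\|_D^2\lesssim\|p_{\lfloor m/\gamma\rfloor}f-1\|_D^2\lesssim 1/\log(m+2)$, which is the stated conclusion.

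Following the preceding remark, the natural candidate for the $\gamma=1$ case is
\[
P_n(z)=\sum_{k=0}^n\Bigl(1-\frac{H_k}{H_{n+1}}\Bigr)b_k z^k,
\]
where $\{b_k\}$ are the Taylor coefficients of $1/f$. Writing $c_i=(1-H_i/H_{n+1})b_i$ for $0\le i\le n$ and $c_i=0$ for $i>n$, the Cauchy product together with the cancellation $\sum_{i=0}^k a_ib_{k-i}=\delta_{k,0}$ shows that the $z^k$-coefficient $\alpha_k$ of $P_nf-1$ vanishes at $k=0$ and, for $k\ge 1$, admits a ``main part'' $-H_{n+1}^{-1}\sum_{i=0}^k H_ib_ia_{k-i}$ supplemented, when $k>n$, by two tail sums of the form $\sum_{i=n+1}^k b_ia_{k-i}$ and $H_{n+1}^{-1}\sum_{i=n+1}^k H_ib_ia_{k-i}$.

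The first key step is to prove a strongly-invertible analog of Lemma~\ref{fundamental}:
\[
\Bigl|\sum_{i=0}^k H_ib_ia_{k-i}\Bigr|\le\frac{C}{(k+1)^2}\qquad\text{for all }k\ge 1.
\]
Cancellation rewrites the left-hand sum as $-\sum_{i=0}^{k-1}(H_k-H_i)b_ia_{k-i}$; the elementary estimate $H_k-H_i\le(k-i)/(i+1)$ combined with $|b_i|\le C(i+1)^{-1}$ and $|a_{k-i}|\le C(k-i+1)^{-3}$ reduces the problem to bounding the convolution $\sum_{i=0}^{k-1}(i+1)^{-2}(k-i+1)^{-2}$, which is $O((k+1)^{-2})$ after splitting at $i=k/2$. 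Note that this is stronger than its polynomial-case analog in Lemma~\ref{fundamental}, the extra factor $(k+1)^{-1}$ arising from the cubic decay of $\{a_j\}$. Plugging it into $\sum_{k=1}^n(k+1)|\alpha_k|^2$ controls the main-part contribution by $CH_{n+1}^{-2}\sum_k(k+1)^{-3}=O(1/\log^2(n+2))$, already stronger than needed.

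The main technical obstacle, absent from the polynomial-$f$ proof of Proposition~\ref{prop43}, is the tail contribution $\sum_{k>n}(k+1)|\alpha_k|^2$, which arises because $f$ has infinitely many nonzero Taylor coefficients. The plan is to control $\sum_{i=n+1}^k b_ia_{k-i}$ and $\sum_{i=n+1}^k H_ib_ia_{k-i}$ by convolution-style estimates that split the range $(n,k]$ at its midpoint and combine both decay rates $|b_i|\lesssim(i+1)^{-1}$ and $|a_{k-i}|\lesssim(k-i+1)^{-3}$, the second being crucial to absorb the weight $(k+1)$ and produce a total of $O(1/\log(n+2))$; one can also use the auxiliary consequence $1/f\in D_2$ (i.e.\ $\sum(i+1)^2|b_i|^2<\infty$) to sharpen the estimate on the tails of $\{b_i\}$ where necessary. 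A parallel (simpler) computation, replacing $(k+1)|\alpha_k|^2$ by $|\alpha_k|$, delivers the uniform $A(\TT)$-bound $\|P_nf\|_{A(\TT)}\le C$ required to run the power-trick. With both estimates in hand, taking $p_n=P_n$ completes the $\gamma=1$ case, and the power-trick outlined in the first paragraph concludes the proof.
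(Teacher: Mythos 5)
Your overall architecture---the Riesz-type polynomials $P_n(z)=\sum_{k=0}^n\bigl(1-H_k/H_{n+1}\bigr)b_kz^k$, a Control-Lemma analogue with weights $H_i$ exploiting the cubic decay of $\{a_j\}$, a uniform $A(\TT)$ bound, and the power trick of Theorem \ref{distForPol} to pass from $f$ to $g=f^\gamma$---is exactly what the paper prescribes (the paper only sketches this argument inside the surrounding Remark, so there is no detailed proof to compare against). Your Control-Lemma analogue $\bigl|\sum_{i=0}^kH_ib_ia_{k-i}\bigr|\le C(k+1)^{-2}$ is correctly proved, the resulting $O(H_{n+1}^{-2})$ bound on the main part is right, and the power trick is sound since $(P_nf)(0)=b_0a_0=1$.

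The gap is in your treatment of the range $k>n$. You propose to bound the two tail sums $\sum_{i=n+1}^kb_ia_{k-i}$ and $H_{n+1}^{-1}\sum_{i=n+1}^kH_ib_ia_{k-i}$ separately by absolute-value convolution estimates. This cannot work: each sum contains the near-diagonal terms $i\approx k$, for which $|b_ia_{k-i}|\approx|b_k||a_0|$ may be of order $(k+1)^{-1}$ (the hypothesis gives only $|b_k|\le C(k+1)^{-1}$, and once absolute values are taken no cancellation is available), so the best pointwise bound is $O((k+1)^{-1})$ and $\sum_{k>n}(k+1)\cdot(k+1)^{-2}=\sum_{k>n}(k+1)^{-1}$ diverges; neither the midpoint splitting nor the $1/f\in D_2$ information repairs a pointwise estimate of this type. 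The two tails are small only after recombination, e.g.\ via the cancellation identity $\sum_{i=n+1}^kb_ia_{k-i}=-\sum_{i=0}^nb_ia_{k-i}$. The cleanest fix is not to introduce the tails at all: for $k>n$ the coefficient of $z^k$ in $P_nf-1$ is exactly the finite sum $\alpha_k=\sum_{i=0}^n(1-H_i/H_{n+1})b_ia_{k-i}$ with $0\le 1-H_i/H_{n+1}\le 1$, whence $|\alpha_k|\le C\sum_{i=0}^n(i+1)^{-1}(k-i+1)^{-3}$ with $k-i\ge k-n$; Minkowski's inequality in the weighted space $\ell^2\bigl((k+1)\bigr)$ over $k>n$ then yields $\sum_{k>n}(k+1)|\alpha_k|^2\le C(n+1)^{-1}$, and summing the same bound without squares gives $\sum_{k>n}|\alpha_k|\le C$. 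Both are more than enough for the $D$-norm and $A(\TT)$ estimates, and with that replacement your proof goes through.
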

\end{rem}

It would be natural to investigate whether these Riesz-type polynomials provide close to
optimal approximants for more general functions, in particular functions of the form $f_{\beta}(z)=(1-z)^{\beta}$, when $\beta<1$.
Another interesting question would be whether the rate of decay that we have observed for functions admitting an analytic continuation
to the closed disk holds for other functions that vanish precisely on the same set.

\section{Logarithmic conditions}\label{log}

It is well-known that if $f$ is invertible in the Hardy or Dirichlet space, then $f$ is cyclic in that space.
In addition, it is easy to see that if both $f$ and $1/f$ are in $D_\alpha$ and
$f$ is bounded then $\log f \in D_\alpha,$ but the converse does not
hold.  The condition that $\log f \in D_\alpha$ can be thought of as an intermediate
between $f \in D_\alpha$ and $1/f \in D_\alpha$. Indeed, $\log f \in D_\alpha$ is
equivalent to $f'/f$ being a $D_{\alpha-2}$ function. On the other hand,  $f \in
D_\alpha$ if and only if $f' \in D_{\alpha-2},$ while $1/f \in D_\alpha$
if and only if $f'/f^2 \in D_{\alpha-2}$.
We therefore want to study the following question:
\begin{prob}\label{prob1}
Is any function $f \in D_\alpha$, with logarithm $q= \log f \in
D_\alpha$, cyclic in $D_\alpha$?
\end{prob}

In several cases the statement is true:
If $\alpha >1$ or $\alpha = 0$, and $f \in D_\alpha$ with its
logarithm $q= \log f \in D_\alpha$, then $f$ is cyclic in
$D_\alpha$.  Indeed, for $\alpha >1$, $\log f \in D_\alpha$ implies $1/f \in
H^\infty$, which is equivalent to the cyclicity of $f$ (see p.~274 of \cite{BS84}). For $\alpha=0$, it is easy to see that if $\log f \in H^1,$
then $\log|f(0)| = (1/2\pi) \int_0^{2\pi} \log|f(e^{i \theta})| d \theta,$ and therefore
 $f$ is outer, that is, cyclic in $H^2$.
Moreover, the logarithmic condition implies the following
interpolation result, valid for all $\alpha <2$.

\begin{lem}\label{interpolation}
Suppose $f \in D_{\alpha}$ and $\log f \in D_{\alpha}$. Then, for any $\tau \in (0,1]$, we have
\[D_{\alpha}(f^{\tau})\leq {\tau}^2\left(D_{\alpha}(f)+D_{\alpha}(\log f)\right),\]
and consequently, $f^{\tau} \in D_{\alpha}$.
\end{lem}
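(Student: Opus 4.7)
\medskip

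\noindent\textbf{Proof plan.}  The natural approach is to reduce the problem to a pointwise inequality and then integrate against the weight.  Since $\log f \in D_\alpha$ is well defined, $f$ does not vanish in $\DD$, so we may legitimately set $f^\tau = \exp(\tau \log f)$ for any $\tau\in(0,1]$.  Writing $q = \log f$, a direct computation gives $(f^\tau)' = \tau\, q'\, e^{\tau q} = \tau\, q'\, f^\tau$, and hence
\[
|(f^\tau)'|^2 = \tau^2\, |q'|^2\, |f|^{2\tau}.
\]
The key observation is the elementary pointwise bound
\[
|f(z)|^{2\tau} \leq |f(z)|^2 + 1, \qquad z\in\DD,
\]
valid for every $\tau\in(0,1]$: if $|f(z)|\geq 1$ then $|f|^{2\tau}\leq |f|^{2}$, and if $|f(z)|<1$ then $|f|^{2\tau}\leq 1$.

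Given this, the proof is essentially a two-line integration.  Multiplying through by $\tau^2|q'|^2(1-|z|^2)^{1-\alpha}$ and integrating over $\DD$, I would write
\[
D_\alpha(f^\tau) = \tau^2 \int_\DD |q'(z)|^2\, |f(z)|^{2\tau}\, (1-|z|^2)^{1-\alpha}\, dA(z),
\]
and then split the right-hand side using the pointwise bound into
\[
\tau^2 \int_\DD |q'(z)|^2\, |f(z)|^{2}\, (1-|z|^2)^{1-\alpha}\, dA(z) + \tau^2 \int_\DD |q'(z)|^2\, (1-|z|^2)^{1-\alpha}\, dA(z).
\]
The first integral equals $\tau^2 \int_\DD |f'(z)|^2(1-|z|^2)^{1-\alpha}\,dA(z) = \tau^2 D_\alpha(f)$, via the identity $f'=q' f$, and the second is exactly $\tau^2 D_\alpha(q)$, yielding the stated inequality.

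For the consequence that $f^\tau\in D_\alpha$, it suffices to observe that the integral norm $D_\alpha(f^\tau)$ is finite by the inequality just proved, while $|f^\tau(0)| = |f(0)|^\tau < \infty$ since $f(0)\neq 0$; together these give $\|f^\tau\|_\alpha^2 = |f^\tau(0)|^2 + D_\alpha(f^\tau)<\infty$.  There is no real obstacle: the only thing one must be careful about is choosing a single-valued branch of $\log f$ so that $f^\tau$ is an unambiguous analytic function in $\DD$, which is guaranteed by the hypothesis $\log f \in D_\alpha$.
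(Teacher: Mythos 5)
Your proof is correct and is essentially the paper's argument: the pointwise bound $|f|^{2\tau}\le |f|^2+1$ is exactly the paper's case split into $\{|f|<1\}$ and $\{|f|\ge 1\}$, packaged as a single inequality before integrating against $d\mu_\alpha$. The identities $(f^\tau)'=\tau q' f^\tau$ and $f'=q'f$ are used in the same way, so the two proofs coincide in substance.
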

\begin{proof}
It suffices to establish the bound on $D_{\alpha}(f^{\tau})$. To this end, we write
\begin{eqnarray*}
D_{\alpha}(f^{\tau}) &=&\int_{\DD}|(f^{\tau})'(z)|^2d\mu_{\alpha}(z)={\tau}^2\int_{\DD}\left|\frac{f'(z)}{f(z)}\right|^2|f(z)|^{2\tau}d\mu_{\alpha}(z)\\
&=&{\tau}^2\int_{\mathbb{\DD}}\left|\frac{f'(z)}{f(z)}\right|^2|f(z)|^{2\tau}\chi_{\{z\in\DD\colon |f(z)|< 1\}}d\mu_{\alpha}(z)\\
&&+\,{\tau}^2\int_{\DD}\left|\frac{f'(z)}{f(z)}\right|^2|f(z)|^{2\tau}\chi_{\{z\in\DD\colon |f(z)|\geq 1\}}d\mu_{\alpha}(z)\\
&\leq& {\tau}^2\int_{\DD}\left|\frac{f'(z)}{f(z)}\right|^2\chi_{\{z\in\DD\colon |f(z)|< 1\}}d\mu_{\alpha}(z)\\&&+\,
{\tau}^2\int_{\DD}|f'(z)|^2\chi_{\{z\in\DD\colon |f(z)|\geq 1\}}d\mu_{\alpha}(z),
\end{eqnarray*}
and the resulting integrals can be bounded in terms of $D_{\alpha}(f)$ and $D_{\alpha}(\log f)$, as claimed.
\end{proof}
This lemma allows us to show that for a function $f$ in the
Dirichlet space $D$, corresponding to the case $\alpha=1$, the
condition $\log f\in D$ does imply the cyclicity of $f$. The proof
relies on the following theorem due to Richter and Sundberg (see
\cite[Theorem 4.3]{RS92} and let $\mu$ be Lebesgue measure).

\begin{thm}[Richter and Sundberg, 1992]
If $f\in D$ is an outer function, and if $\tau>0$ is such that $f^{\tau}\in D$, then $[f]=[f^{\tau}]$.
\end{thm}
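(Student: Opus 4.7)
The plan is to prove the two inclusions $[f^\tau]\subseteq[f]$ and $[f]\subseteq[f^\tau]$ separately. Since $f$ is outer it is non-vanishing in $\DD$, so $f^\tau=\exp(\tau\log f)$ is a well-defined analytic function there and is itself outer (positive powers of outer $H^2$-functions are outer). Both $[f]$ and $[f^\tau]$ are thus closed $M_z$-invariant subspaces of $D$ generated by outer elements. Applying the asserted equality with $(\tau,f)$ replaced by $(1/\tau,f^\tau)$ formally gives $[f^\tau]=[(f^\tau)^{1/\tau}]=[f]$, so by this duality it suffices to treat $\tau\in(0,1]$.

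\textbf{Forward inclusion.} To show $[f^\tau]\subseteq[f]$, I would exhibit polynomials $p_n$ with $\|p_nf-f^\tau\|_D\to 0$, heuristically by approximating the ratio $f^{\tau-1}=1/f^{1-\tau}$. Since the Taylor sections of $1/f^{1-\tau}$ may fail to be norm-bounded --- precisely the phenomenon that motivated the introduction of optimal approximants in Section~\ref{s-GRADIENT} --- I would instead pass to Ces\`aro or Riesz-type modifications in the spirit of the explicit formulas computed in Section~\ref{optimalPnEx}, then verify $p_nf\to f^\tau$ in $D$-norm by a computation paralleling the control estimates in the proof of Proposition~\ref{prop43}. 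The relevant intermediate integrability of $f^{1-\tau}$ can be obtained by an interpolation argument in the spirit of Lemma~\ref{interpolation}, using only the hypotheses $f,f^\tau\in D$.

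\textbf{Reverse inclusion and main obstacle.} The opposite inclusion $[f]\subseteq[f^\tau]$ is the harder direction and is where I expect the main obstacle. One must produce polynomials $q_n$ with $\|q_nf^\tau-f\|_D\to 0$, i.e., $q_n$ must approximate $f^{1-\tau}$ in the norm induced by multiplication by $f^\tau$. This weighted approximation is not immediately reducible to ordinary polynomial approximation in $D$, since $f^\tau$ is not assumed to be a multiplier of $D$ (it may degenerate on the boundary). The cleanest resolution is to invoke Richter's structure theorem for $M_z$-invariant subspaces of $D$: every such subspace $M$ satisfies $\dim(M\ominus zM)=1$, is singly generated, and carries a distinguished extremal function determined up to a scalar by the orthogonality to $zM$. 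Applied to $[f]$ and $[f^\tau]$, both generated by outer functions and hence having trivial ``inner'' factor in the Richter sense, a direct calculation with the orthogonality conditions should force the two extremal functions to coincide up to a scalar, yielding $[f]=[f^\tau]$. A more concrete backup exploits the local Dirichlet integral of Richter-Sundberg: one writes $D(q_nf^\tau-f)$ as an integral along boundary fibers and estimates it by isolating the small (logarithmic capacity zero) boundary set where $|f|$ degenerates, adapting the choice of $q_n$ to this exceptional set in a manner analogous to the piecewise construction behind Theorem~\ref{OptimalRate}.
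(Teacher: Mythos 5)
First, a point of order: the paper does not prove this statement --- it is imported verbatim from Richter and Sundberg \cite[Theorem 4.3]{RS92} and used as a black box in the proof of Theorem \ref{solvelog} --- so there is no internal argument to compare yours against, and your proposal must stand on its own. It does not. The decisive flaw is in your reverse inclusion: you assert that because $[f]$ and $[f^{\tau}]$ are singly generated invariant subspaces with outer generators, the codimension-one structure theorem plus ``a direct calculation with the orthogonality conditions'' should force their extremal functions to coincide up to a scalar. That is false as a general principle. Distinct invariant subspaces of $D$ can both be generated by outer functions: by Brown--Shields, any outer $f\in D$ whose boundary zero set has positive logarithmic capacity satisfies $[f]\subsetneq D=[1]$, yet both $f$ and $1$ are outer. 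Single generation and $\dim(M\ominus zM)=1$ give no criterion for deciding when two such subspaces coincide; any workable argument must exploit the specific relation between the boundary behavior of $f$ and of $f^{\tau}$, which your orthogonality calculation never touches.

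The forward inclusion is also not established. The control estimates of Proposition \ref{prop43} and Lemma \ref{fundamental} depend essentially on $f$ being a polynomial with simple zeros in $\CC\setminus\DD$ --- the partial-fraction expansion of $1/f$ with bounded Taylor coefficients is the engine of that whole argument --- and there is no reason the Ces\`aro or Riesz modifications you invoke converge to $f^{\tau-1}$ in the relevant weighted sense for a general outer $f\in D$ with $f^{\tau}\in D$; Lemma \ref{interpolation} controls $D_{\alpha}(f^{\tau})$ only under the additional hypothesis $\log f\in D_{\alpha}$, which is not available here. Your ``backup'' via the local Dirichlet integral is in fact the right neighborhood --- Richter and Sundberg's actual proof runs through their formula for the local Dirichlet integral and comparison results for outer functions $g,h\in D$ with $|g|\leq |h|$ a.e.\ on $\TT$ --- but as written it is a statement of intent rather than an argument. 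The reduction to $\tau\in(0,1]$ by applying the claim to $(1/\tau, f^{\tau})$ is the one step that is genuinely correct.
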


In \cite{RS92}, Richter and Sundberg applied this theorem by showing that if
$f$ is univalent and non-vanishing, then $f^\tau\in D,$ and hence is cyclic.
In what follows, we do not require univalence.

\begin{thm}\label{solvelog}
Suppose $f\in D$ and $\log f \in D$. Then $f$ is cyclic in the Dirichlet space.
\end{thm}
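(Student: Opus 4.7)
The plan is to combine Lemma \ref{interpolation} with the Richter--Sundberg theorem, using fractional powers $f^\tau$ as a bridge from $f$ to the constant function $1$.

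First, I would verify that $f$ is outer. Since $\log f \in D \subset H^2$ is a single-valued analytic function on $\DD$, the exponential $f = e^{\log f}$ is automatically non-vanishing in $\DD$. Applying the mean value property to the $H^1$ function $\log f$ and taking real parts yields
\[\log|f(0)| = \frac{1}{2\pi}\int_{-\pi}^{\pi} \log|f^*(e^{i\theta})|\, d\theta,\]
so $f$ is outer.

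Next, for each $\tau \in (0,1]$, Lemma \ref{interpolation} gives $f^\tau \in D$ together with the quantitative bound
\[D(f^\tau) \leq \tau^2 \bigl( D(f) + D(\log f) \bigr).\]
Because $f$ is outer and $f^\tau \in D$, the Richter--Sundberg theorem stated just above yields $[f^\tau] = [f]$, and in particular $f^\tau \in [f]$ for every $\tau \in (0,1]$.

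It remains to let $\tau \to 0^+$. The value at the origin satisfies $f^\tau(0) = e^{\tau \log f(0)} \to 1$, while the displayed bound forces $D(f^\tau - 1) = D(f^\tau) \to 0$ (constants have zero $D$-seminorm). Combining these two estimates gives $\|f^\tau - 1\|_D \to 0$ as $\tau \to 0^+$. Since $[f]$ is norm-closed in $D$ and contains every $f^\tau$, the limit $1$ also lies in $[f]$, so $[f] = D$ and $f$ is cyclic. The one mildly delicate step is the outer verification; once that is secured, the remainder of the argument is essentially automatic from the interpolation lemma, the Richter--Sundberg identification $[f^\tau] = [f]$, and continuity of $\tau \mapsto f^\tau$ at $0$ in the $D$-norm.
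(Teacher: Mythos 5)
Your proof is correct and follows essentially the same route as the paper: the outer property from $\log f\in H^1$, Lemma \ref{interpolation} to get $f^\tau\in D$ with $D(f^\tau)=O(\tau^2)$, Richter--Sundberg to identify $[f^\tau]=[f]$, and the limit $f^\tau\to 1$ in $D$ as $\tau\to 0^+$. You merely spell out a few details (the outer verification and the norm convergence $\|f^\tau-1\|_D\to 0$) that the paper leaves implicit.
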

\begin{proof}
As discussed above, the logarithmic condition $\log f\in D$ implies that $f$ is outer. Next, by
Lemma \ref{interpolation}, $f^{\tau} \in D$ for all $\tau>0$, and so $[f]=[f^{\tau}]$ for each $\tau$. Since the Lemma also
implies $f^{\tau}\rightarrow 1$ in $D$ as $\tau\rightarrow 0$, we have $[f]=[1]$, and the assertion
follows.
\end{proof}

The following is the main result for the remaining cases $\alpha <
0$ and $0<\alpha <1$.

\begin{thm}\label{cyclic1}
Let $f \in H^\infty$ and $q=\log f \in D_\alpha$. Suppose there is a
sequence of polynomials $\{q_n\}$ that approach $q$ in $D_\alpha$
norm with $$2 \sup_{z \in \DD} \mathrm{Re}(q(z)-q_n(z)) + \log
(\|q-q_n\|^2_\alpha) \leq C$$ for some constant $C>0$. Then $f$ is
cyclic in $D_\alpha$.
\end{thm}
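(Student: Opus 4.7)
The plan is to construct polynomial approximants $p_n$ for which $\{p_n f\}$ is bounded in $D_\alpha$ and converges pointwise to $1$ on $\DD$, and then invoke the Brown--Shields criterion \eqref{cyclicitycond:unifbound}--\eqref{cyclicitycond:pointwise}. Note that $f = e^q$ is automatically in $D_\alpha$: the bound $|f'|^2 \le \|f\|_\infty^2 |q'|^2$ gives $D_\alpha(f) \le \|f\|_\infty^2 D_\alpha(q) < \infty$.

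First I would consider the (non-polynomial) auxiliary approximants $F_n := f \cdot e^{-q_n} = e^{q-q_n}$. Differentiating yields $F_n' = (q'-q_n')F_n$, and since $|F_n|^2 = e^{2\mathrm{Re}(q-q_n)}$,
\[
D_\alpha(F_n) \le e^{2\sup_\DD \mathrm{Re}(q-q_n)} D_\alpha(q-q_n) \le e^{2\sup_\DD \mathrm{Re}(q-q_n)} \|q-q_n\|_\alpha^2 \le e^C,
\]
where the final step is exactly the hypothesis. Since $D_\alpha$-convergence implies pointwise convergence on $\DD$ (point evaluation is a bounded functional on $D_\alpha$), one has $F_n(0)\to 1$, so $\|F_n-1\|_\alpha$ is uniformly bounded, and $F_n(z)\to 1$ for every $z \in \DD$. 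Thus $\{F_n\}$ already satisfies the Brown--Shields conditions, except that $F_n$ is not a polynomial.

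To polynomialize, set $\psi_n := e^{-q_n}$, which is \emph{entire} because $q_n$ is a polynomial; its Taylor coefficients $c_k^{(n)}$ therefore satisfy $|c_k^{(n)}| \le M_n R^{-k}$ for every $R>1$ and some finite $M_n$. Let $P_n := T_{k_n}(\psi_n)$ be its Taylor polynomial of degree $k_n$. Arguing as in the proof of Lemma \ref{estimates_analytic_continuable}---namely $\|c_k^{(n)} z^k f\|_\alpha \le C(k+1)^{\alpha/2}R^{-k} M_n\|f\|_\alpha$---and summing over $k>k_n$ shows that for each fixed $n$, both $\sup_\DD|\psi_n - P_n|$ and $\|(\psi_n-P_n)f\|_\alpha$ tend to $0$ as $k_n \to \infty$. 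A diagonal choice of $k_n$ then yields polynomials $p_n := P_n$ with $\|p_n f - 1\|_\alpha \le \|F_n - 1\|_\alpha + 1/n$ uniformly bounded and $p_n(z) f(z) \to 1$ pointwise on $\DD$, so the Brown--Shields criterion delivers cyclicity of $f$.

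The step requiring the most care is the polynomial-approximation step when $0 < \alpha < 1$, where $M(D_\alpha) \subsetneq H^\infty$ so uniform control of $P_n-\psi_n$ is not by itself enough to bound $(P_n-\psi_n)f$ in $D_\alpha$. What saves the argument is that $\psi_n$ is entire (not merely analytic in the closed disk), giving coefficient decay faster than any geometric rate; this is more than sufficient to force $(P_n-\psi_n)f$ to be small in $D_\alpha$ via the coefficient-by-coefficient analysis already used in Lemma \ref{estimates_analytic_continuable}.
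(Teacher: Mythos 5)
Your proof is correct, and its core coincides with the paper's: both arguments pivot on the auxiliary functions $e^{-q_n}f=e^{q-q_n}$, whose derivative $(q'-q_n')e^{q-q_n}$ has $L^2(\mu_\alpha)$ norm squared at most $e^{2\sup_{\DD}\mathrm{Re}(q-q_n)}D_\alpha(q-q_n)\lesssim e^{C}$ --- exactly your computation --- so that $\|e^{-q_n}f-1\|_\alpha$ stays bounded while $e^{-q_n}f\to 1$ pointwise, and cyclicity follows from the Brown--Shields criterion \eqref{cyclicitycond:unifbound}--\eqref{cyclicitycond:pointwise}. Where you genuinely diverge is the polynomialization step. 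The paper picks $p_n$ by Weierstrass approximation of $(e^{-q_n})'=-q_n'e^{-q_n}$ in the sup norm and then invokes the assertion that, for $\alpha\leq 1$, the multiplier norm of $p_n-e^{-q_n}$ is controlled by the $H^\infty$ norm of its derivative; that assertion is stated without proof. You instead truncate the Taylor series of the entire function $e^{-q_n}$ and control the tail coefficient-by-coefficient via $\|z^kf\|_\alpha\leq (k+1)^{\alpha/2}\|f\|_\alpha$ (for $\alpha\geq 0$; constant $1$ for $\alpha<0$), exactly as in Lemma \ref{estimates_analytic_continuable}, followed by a diagonal choice of truncation degrees. This is more elementary and self-contained, and you correctly identify $0<\alpha<1$ as the only range where uniform smallness of $P_n-\psi_n$ alone would not suffice. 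The one cosmetic omission is the reduction to $\alpha<2$: your use of the integral norm $D_\alpha$ requires it, but for larger $\alpha$ the theorem is immediate (as the paper notes, $\log f\in D_\alpha$ with $\alpha>1$ forces $1/f\in H^\infty$), so nothing is lost.
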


\begin{rem}
An immediate consequence of Theorem \ref{cyclic1}
is that
if $q = \log f$ can be approximated in $D_{\alpha}$ by polynomials $\{q_n\}$ with $\sup_{z \in \DD}
\mathrm{Re}(q(z)-q_n(z)) < C$, then $f$ is cyclic.
Brown and Cohn proved (see \cite[Theorem B]{BC85}) that for any
closed set of logarithmic capacity zero $E \subset
\partial \DD$, there exists a cyclic function $f$ in $D$ such that
$\mathcal{Z}(f^*) =E$. The functions they build satisfy this hypothesis on $q_n$, and therefore,
these assumptions are always satisfied by
at least one cyclic function, for any potential cyclic function zero set.
\end{rem}

\begin{proof}[Proof of Theorem \ref{cyclic1}]
We can assume $\alpha \leq 1$, because otherwise the statement is
immediate. As discussed earlier in this section, the
function $f$ is in $D_\alpha$. Now, for any sequence of polynomials $p_n,$
by the triangle inequality
\begin{equation}
\|p_n f -1\|_\alpha \leq \|p_n f - e^{-q_n}f\|_\alpha +
\|e^{-q_n} f -1\|_\alpha. \label{trineq}
\end{equation}

The first summand on the right hand side can be bounded by
$$\|(p_n-e^{-q_n})f\|_\alpha \leq \|p_n -
e^{-q_n}\|_{M(D_{\alpha})} \|f\|_\alpha.$$
Moreover, for $\alpha \leq 1,$ the multiplier norm of a
function is controlled by the $H^\infty$ norm of its derivative.
Hence, a good choice of approximating polynomials is to select $\{p_n\}$ so
that $p_n(0)=e^{-q_n(0)}$ and $\|p'_n + q'_n e^{-q_n}\|_{H^\infty} \leq
1/n$, which is possible by Weierstrass' Theorem. With this choice, the first summand on
the right hand side in
\eqref{trineq} approaches $0$ as $n \rightarrow \infty$.

Note that these  polynomials
$p_n$ converge pointwise to $1/f,$ and therefore, to prove the
cyclicity of $f$, it is sufficient to show that the norms of $p_n f-1$  stay bounded.
So what remains to show is that, as $n$ goes to infinity,
$\|e^{-q_n}f-1\|^2_\alpha $ is uniformly bounded. To evaluate this
expression for large $n$, we use the norm in terms of the
derivative:
$$\|e^{-q_n}f-1\|^2_\alpha \approx \|-q'_n e^{-q_n}f + e^{-q_n}f'\|^2_{\alpha-2} + |e^{-q_n(0)}f(0)-1|^2. $$
The last term tends to $0$ since
$q_n$ approaches $q$ pointwise.

In the first summand on the right hand side, taking out a common factor, we
see that
\begin{eqnarray*}
\|-q'_n e^{-q_n}f + e^{-q_n}f'\|^2_{\alpha-2} & \leq &
\|e^{q-q_n}\|^2_{H^{\infty}} \left\|\frac{f'}{f} -
q'_n\right\|^2_{\alpha-2} \\
& \leq &  C e^{2\sup \mathrm{Re}(q-q_n)} \|q - q_n\|^2_{\alpha}
\end{eqnarray*}
for some constant $C$.
Given our assumptions on $q_n$, the right hand side is less than a constant.
This concludes the proof.

\end{proof}

It would be interesting to determine whether the required approximation property of the polynomials $q_n$ in Theorem \ref{cyclic1} is a consequence of
the other hypotheses.

\section{Asymptotic zero distributions for approximating polynomials}
In this paper we have primarily been interested in functions $f\in D_{\alpha}$ that are cyclic and have $f^*(\zeta)=0$ for at least one $\zeta\in \mathbb{T}$. Prime examples of such a function
are
\[f_{\beta}(z)=(1-z)^{\beta}, \quad \beta \in [0,\infty),\] which we have examined closely in this paper for $\beta$ a natural number.

Numerical experiments, described below, suggest that a study of the zero sets $\mathcal{Z}(p_n)$ of approximating polynomials may be interesting from the point of view of cyclicity. It seems that the rate at which zeros approach the circle is related to the extent to which the corresponding polynomials furnish approximants in $D_{\alpha}$. For instance, we have compared the zero sets associated with the Taylor polynomials of $1/f_{\beta}$ with those of Riesz-type polynomials,
\begin{equation}
\mathcal{R}_n\left(\frac{1}{f_{\beta}}\right)(z)=
\sum_{k=0}^n\left(1-\frac{H_{k}}{H_{n+1}}\right)b_kz^k, \quad n\geq
1. \label{rieszpolyszeros}
\end{equation}

\begin{figure}
\includegraphics[width=0.32 \textwidth]{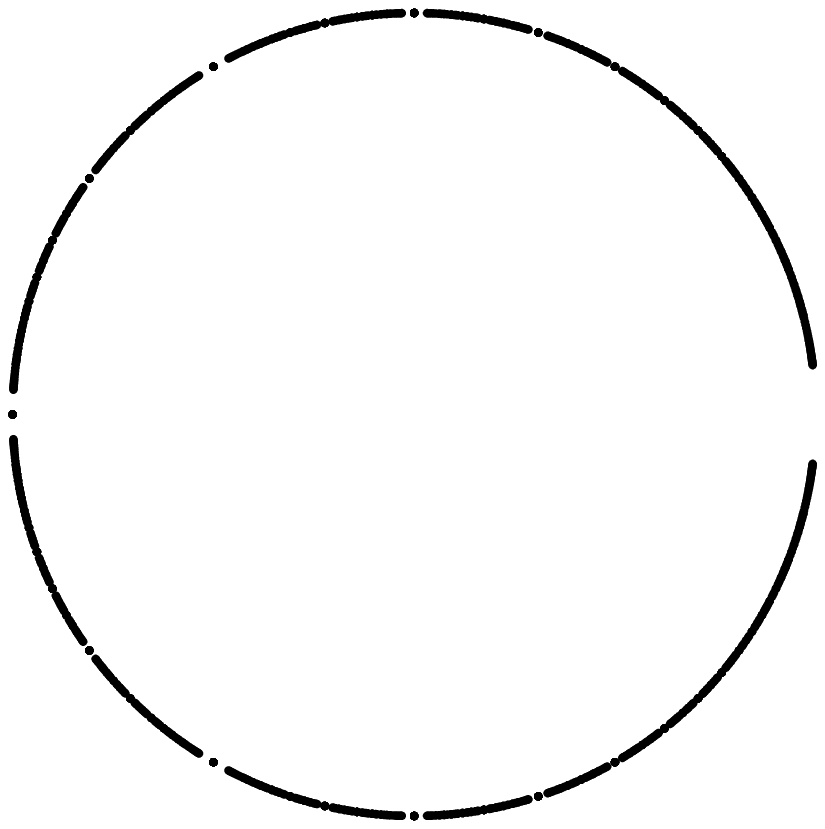}\\
\includegraphics[width=0.32 \textwidth]{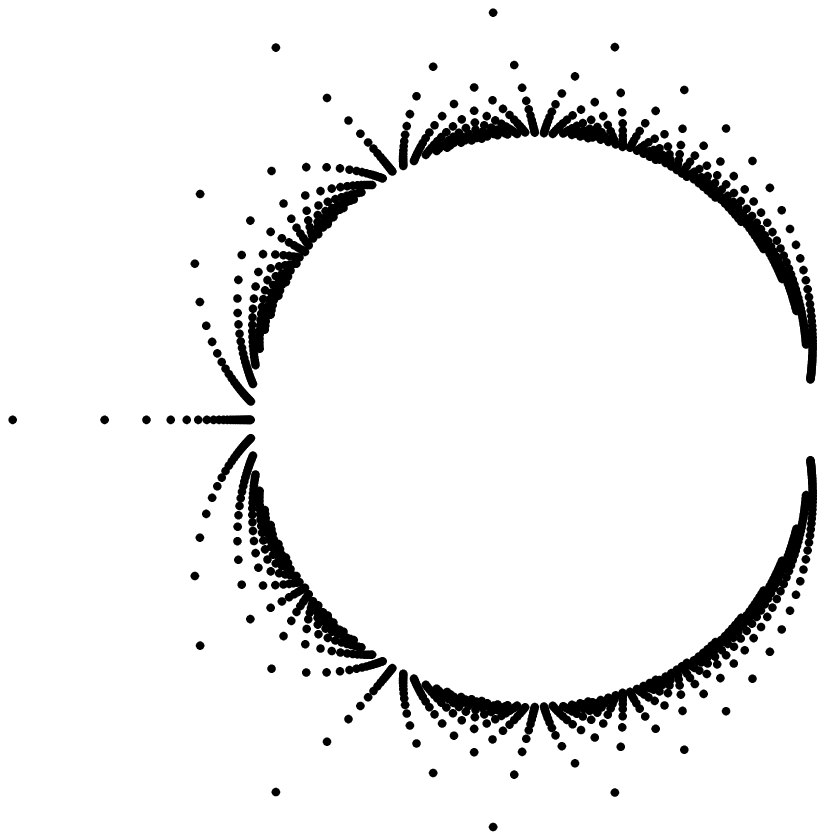}
\includegraphics[width=0.32 \textwidth]{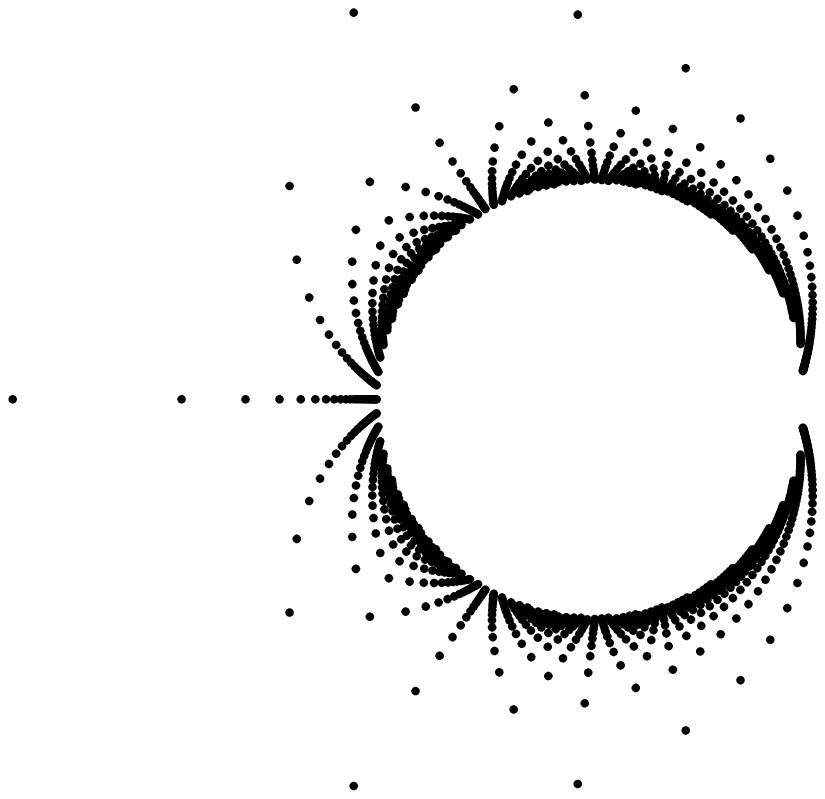}
\caption{Left to right: Successive zero sets $\mathcal{Z}(T_n)$,
$\mathcal{Z}(C_n)$, and $\mathcal{Z}(R_n)$, for $f_1=1-z$, and
$n=1,\ldots,50$.} \label{oneminuszeefig}
\end{figure}

Intuitively, since $1/f_{\beta}$ has a pole at $z=1$, we should expect the approximating polynomials
$p_n$ to be ``large'' in the
intersection of disks of the form $B(1,r)$ with the unit disk. On
the other hand, the remainder functions $p_nf-1$ have to tend to
zero in norm (and hence pointwise). We note that since $1/f_{\beta}$ has a pole on $\TT$, the Taylor
series of $1/f_{\beta}$ cannot have radius of convergence greater than $1$. It therefore follows from
Jentzsch's theorem that every point on $\TT$ is a limit point of the zeros of the sequence $\{T_n(1/f_{\beta})\}_{n=1}^{\infty}$. We refer the reader to \cite{Vargas12} for background material concerning sections of polynomials, and for useful computer code.

We start with the simplest case $f_1(z)=1-z$. The zeros of the Taylor polynomials $T_n(1/f)$, the Ces\`aro polynomials $C_n(1/f)$, and the Riesz polynomials $R_n(1/f)$, for $n=1,\ldots, 50$, can be found in Figure \ref{oneminuszeefig}. All the zeros of these polynomials are located outside the unit disk, and inside a certain cardioid-like curve. In the case of the Taylor polynomials, the explicit formula
\[T_n(1/f_1)(z)=\frac{1-z^{n+1}}{1-z}\]
 holds, and so $\mathcal{Z}(T_ n)$ simply consists of the $n$-th roots of unity, minus the point $\zeta=1$. Replacing Taylor polynomials by Ces\`aro polynomials has the effect of repelling zeros away from the unit circle, and into the exterior of the disk. This effect is even more pronounced for the Riesz polynomials \eqref{rieszpolyszeros}, where it appears that convergence of roots to the unit circumference, and the roots of unity in particular, is somewhat slower. Note also the relative absence of zeros close to
the pole of $1/f_1$, and the somewhat tangential approach region at $\zeta=1$.

Next, we turn to a function with two simple zeros on $\TT$, namely
\[f=1-z+z^2.\] Plots of zeros
of successive approximating polynomials are displayed in Figure \ref{oneminuszeepluszee2fig}. While $\mathcal{Z}(T_n)$ is more complicated, the general features of Figure \ref{oneminuszeefig} persist.
We again note a relative absence of zeros close to the two poles of $1/f$, and the zeros of the Ces\`aro and Riesz polynomials are again located in the exterior disk, and seem to tend to $\TT$ more slowly. We observe
approach regions with vertices at the symmetrically placed poles, and the angle at these vertices seems to decrease as we move from Taylor polynomials through Ces\`aro polynomials to the polynomials in \eqref{rieszpolyszeros}.

It seems natural to suspect that locally the picture would be
similar for a polynomial with a large number of zeros on the unit
circle.

\begin{figure}
\includegraphics[width=0.4 \textwidth]{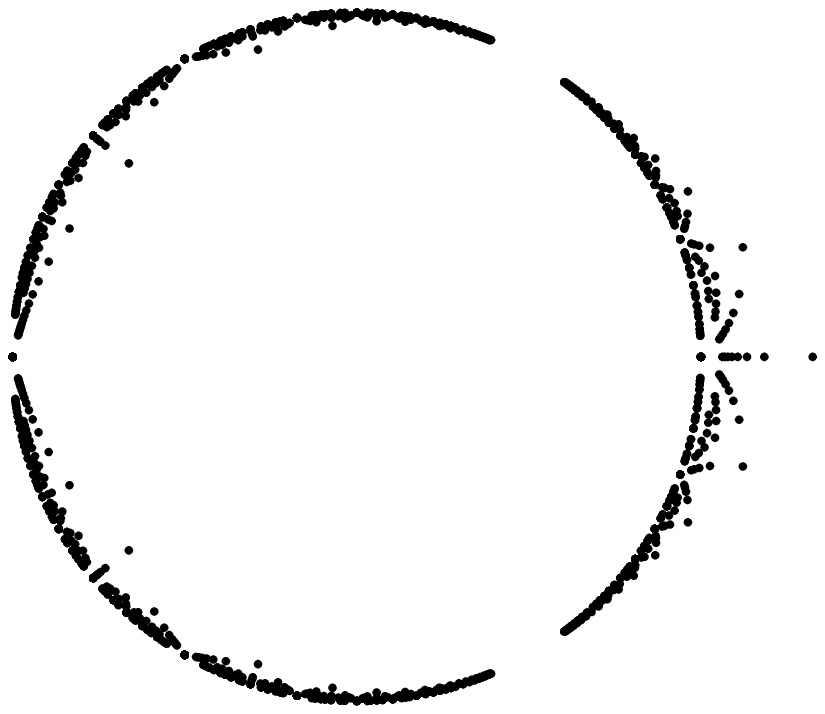}\\
\includegraphics[width=0.45 \textwidth]{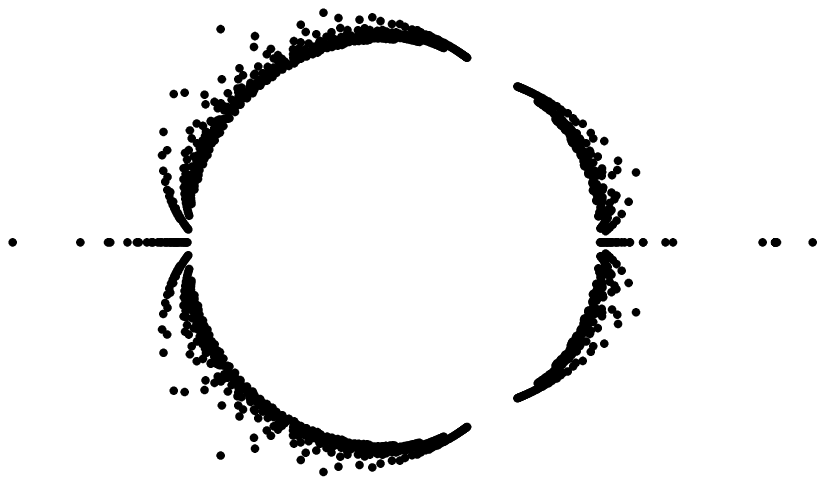}
\includegraphics[width=0.45 \textwidth]{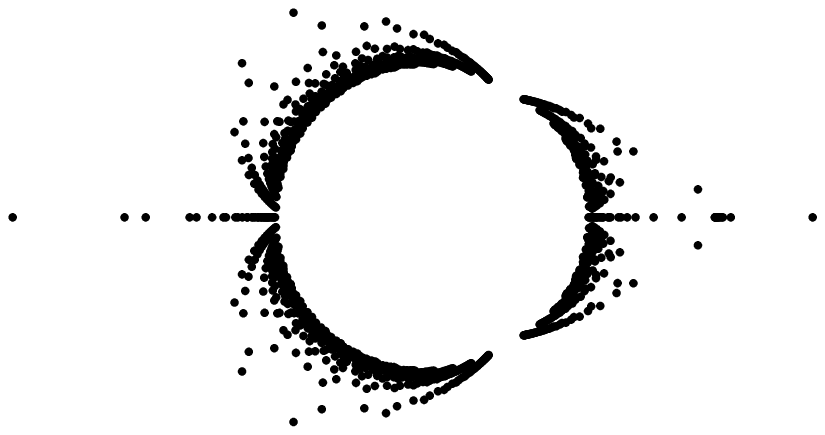}
\caption{Left to right: Successive zero sets $\mathcal{Z}(T_n)$, $\mathcal{Z}(C_n)$, and $\mathcal{Z}(R_n)$, for $f_1=1-z+z^2$, and $n=1,\ldots,50$.}
\label{oneminuszeepluszee2fig}
\end{figure}

It would be interesting to investigate whether there is a relationship between zeros of approximating polynomials, the region of convergence of the Taylor series of $1/f,$
and the cyclicity of $f$ in future work.

\textit{Acknowledgments.} The authors would like to thank Omar El-Fallah, H{\aa}kan Hedenmalm, Dima Khavinson, Artur Nicolau, Boris Shekhtman, and Dragan Vukoti\'{c} for many helpful conversations during the writing of this paper.

\end{document}